\numberwithin{equation}{section}
\DeclareFontFamily{OT1}{rsfs}{}
\DeclareFontShape{OT1}{rsfs}{n}{it}{<-> rsfs10}{}
\DeclareMathAlphabet{\mathscr}{OT1}{rsfs}{n}{it}
\theoremstyle{plain}
\newtheorem{theorem}{Theorem}[section]
\newtheorem{lem}[theorem]{Lemma}
\theoremstyle{definition}
\newtheorem{remark}[theorem]{Remark}
\newcommand{\bal}{\[\begin{aligned}}
\newcommand{\eal}{\end{aligned}\]}
\newcommand{\beeq}{\begin{equation}}\newcommand{\eneq}{\end{equation}}
\newcommand{\beqa}{\begin{eqnarray*}}\newcommand{\eeqa}{\end{eqnarray*}}
\newcommand{\beeqa}{\begin{eqnarray}}\newcommand{\eneqa}{\end{eqnarray}}
\def\<{\langle}             \def\>{\rangle}
\newcommand{\al}{\alpha}    \newcommand{\be}{\beta}
  \newcommand{\vep}{\varepsilon}
  \newcommand{\La}{\Lambda}
    \newcommand{\la}{\lambda}
\newcommand{\sig}{\sigma}  
  \newcommand{\Si}{\Sigma}
\newcommand{\om}{\omega}    
\newcommand{\gam}{\gamma}
\newcommand{\R}{\mathbb{R}}
\newcommand{\Sp}{\mathbb{S}}
\newcommand{\ms}{\mathbb{S}}
\newcommand{\pt}{\partial_t}\newcommand{\pa}{\partial}
\newcommand{\les}{{\lesssim}}
\newcommand{\pri}{\prime}
\newcommand{\hn}{\mathbb{H}^n}
\newcommand{\htw}{\mathbb{H}^2}
\newcommand{\De}{\Delta}
\newcommand{\de}{\delta}
\newcommand{\hf}{\frac{1}{2}}
\newcommand{\one}{\uppercase\expandafter{\romannumeral1}}
\newcommand{\two}{\uppercase\expandafter{\romannumeral2}}
\newcommand{\three}{\uppercase\expandafter{\romannumeral3}}
\newcommand{\four}{\uppercase\expandafter{\romannumeral4}}
\newcommand{\five}{\uppercase\expandafter{\romannumeral5}}
\newcommand{\six}{\uppercase\expandafter{\romannumeral6}}
\newcommand{\sev}{\uppercase\expandafter{\romannumeral7}}
\newcommand{\eig}{\uppercase\expandafter{\romannumeral8}}
\newcommand{\wt}{\widetilde}
\title
[Wave equations with logarithmic nonlinearity]{
The Critical Conjecture of Wave Equations with Logarithmic Nonlinearity on $\mathbb{H}^2$
}
\author{Xiaoran Zhang}
\address{School of Mathematical Sciences\\ Zhejiang University\\ Hangzhou 310058,P.R.China}
\email{1025391337@qq.com}
\thanks{The author was supported by
 NSFC 11971428  and  NSFC 12141102. }
\date{\today}
\begin{document}

\begin{abstract}
In this paper, we verified our critical conjecture in \cite{wang2023wave} on two-dimensional hyperbolic space, that is, 
concerning nonlinear wave equations with logarithmic nonlinearity, which behaves like $\left(\ln {1}/{|u|}\right)^{1-p}|u|$ near $u=0$, on hyperbolic spaces, we demonstrate that the critical power is $p_c(2)=3$, by proving global existence for $p>3$, as well as blow up for $p\in (1,3)$.

\end{abstract}

\keywords{Strauss conjecture; shifted wave; hyperbolic spaces; logarithmic nonlinearity.}

\subjclass[2010]{58J45, 35L05, 35L71, 35B44, 35B33}

\maketitle

\section{Introduction}

Let $n\ge 2$, consider the wave equation
\beeq\label{nlw}
\begin{cases}
\partial_{t}^2 u-(\De_{\hn}+\rho^2)u=F(u)\ ,\\
u(0,x)=\vep u_0, u_t(0,x)=\vep u_1\ ,
\end{cases}
\eneq
where $\rho=\frac{n-1}{2}$ (recall that the spectrum of $-\De_{\hn}$ is $[\rho^2,\infty)$), $u_0, u_1$ are smooth functions with compact support. As is well known, the global existence vs blow-up for nonlinear wave equations with power-type nonlinearities $F(u)\sim |u|^p$ is related to the so-called $Strauss\ conjecture$ in $\R^n$, which has a critical power $p_c(n)>1$. Correspondingly on hyperbolic spaces  $\hn$, it is known to admit global solutions for sufficiently small $\vep>0$, for any power $p\in (1, 1+4/(n-1))$, thanks to the improved (exponential) 
 decay of solutions of linear wave equations.  In some sense, in handling the power nonlinearity, we do not need to explore the precise information on the decay rate and no critical phenomenon appears.
Therefore, it is interesting to capture the critical nature by investigating proper nonlinearities, such as logarithmic form, in an attempt to convert the essence of exponential decay to polynomial decay.


The interest arises from the similar equation in Euclidean spaces
\beeq\label{esnlw}
\begin{cases}
\partial_{t}^2 u-\De_{\R^n}u= |u|^p\ ,\\
u(0,x)=\vep u_0,\ u_t(0,x)=\vep u_1.
\end{cases}
\eneq
It has been studied for a long time and admits a critical power $p=p_c(n)>1$, such that for any compactly supported initial data
with sufficiently small size ($\vep\ll 1$), a regular global solution exists when $p>p_c(n)$, while such a result fails when $1<p<p_c(n)$. The first work in this direction is F. John’s work \cite{MR535704} in 1979 
to determine the critical power $p_c(3)=1+\sqrt{2}$.
Then Strauss \cite{MR614228} conjectured that the critical power $p_c(n)$ for other dimensions $n\ge2$ should be the positive root of the quadratic equation
$$(n-1)p^2-(n+1)p-2=0.$$
The conjecture was verified in Glassey \cite{MR631631}, \cite{MR618199} when $n=2$ with $p_c(2)=(3+\sqrt{17})/2$,
and in \cite{MR1331521}, \cite{MR1408499}, \cite{MR1481816}, \cite{MR1804518}, \cite{MR744303} for other dimensions. There are many articles and reviews that list the history of the $Strauss\ conjecture$, 
such as \cite{wang2023wave}.

On hyperbolic spaces $\hn$, if we take the same power-type nonlinearities $F(u)= |u|^p$, we expect that small data global existences always hold for all $p>1$, owing to the negative curvature. 
 It is first proved by Fontaine \cite{MR1443052} in 1997 when $n=2,3$, 
 for any data $u_0\in C^1(\hn),u_1\in C(\hn)$ satisfying
\beeqa\label{data}
|u_0|+|u_1|+|\nabla u_0|\le  \theta_k\ , 
\eneqa
where $\nabla f=(\partial_r f\partial_r,(\frac{1}{\sinh r})^2\partial^{\om}f\partial_{\om})$, 
$|\nabla f|=((\partial_r f)^2+|\frac{\partial_{\om}f}{\sinh r}|^2)^{1/2}$ and $\theta_k =(\cosh |\cdot|)^{-k-\rho}$, $k>0$, with the metric in geodesic polar coordinates $g_{\hn}=dr^2+(\sinh r)^2d\om^2$, for $(r,\omega)\in (0,\infty)\times\Sp^{n-1}$, see Section \ref{pre}.
For general spatial dimensions $n\ge 2$, we refer the reader to \cite{MR2902129}, \cite{MR4026182}, \cite{MR2775816}, \cite{MR2944727}, \cite{MR3254350} for shifted and nonshifted wave equations and also \cite{MR4169670}, \cite{MR4432952}, \cite{MR4167287}, \cite{MR4310334}, \cite{MR3345662} for similar results on related spaces.
  All these results show that the critical power is $p_c(n)=1$, or we can say that there is no critical powers on hyperbolic spaces with power-type nonlinearities.



Based on the fact of the exponential decay of (linear) solutions which was explicitly stated in \cite{MR1443052}, \cite{MR1804518}
and \cite[Lemma 2.1]{wang2023wave},
we proposed
the investigation of nonlinear wave equations with logarithmic nonlinearities $F_p(u)$ near $u=0$, for some $p>1$ in our previous paper \cite{wang2023wave}.
One typical example
is \beeq\label{eq-nlterm}
F_p(u)=\left(\sinh^{-1} \frac{1}{|u|}\right)^{-(p-1)}|u|\ ,\eneq 
which behaves like
 $(\ln\frac{1}{|u|})^{1-p}|u|$ for small $|u|$ and $|u|^{p}$ for large $|u|$. Further, for such nonlinearities, 
we stated our conjecture that  the critical power is $p_c(n)=3$, regardless of the spatial dimension. More precisely, we conjectured that there exists
$\de=\de(n)>0$ so that 
we have global existence, with small data, for any $p\in (3, 3+\de(n))$, while for
$p\in (1,3)$, there exist
 some data $(u_0, u_1)$ so that there is no global solutions for any $\vep>0$. In \cite{wang2023wave},
 we have proved the conjecture in $n=3$ with the help of the simple representation of solutions in physical case. As is well known, from the perspective of the expression of the solution, it is more difficult to give an argument for nonphysical case, especially for even dimensions.





In this paper, we will concentrate on  $n=2$. 
At first, concerning the problem of global existence with small data, we need only to assume the behavior of $F_p$ near $0$, that is, 
$F_p\in C^1$, $F_p(0)=F'_p(0)=0$, and
\beeqa\label{nonlinear}
|F_p'(u)|\lesssim \left(\ln\frac{1}{|u|}\right)^{1-p},\ \forall 0<|u|\ll 1\ .
\eneqa
Our first main result is the following.

\begin{theorem}[Global existence]\label{thm-sdge}
Let $n=2$ and $p>3$. Considering \eqref{nlw} with 
$F(u)=F_p(u)$ satisfying \eqref{nonlinear},
there exists $\vep_0(p)>0$ so that
the problem admits a global weak solution $u\in C(\R\times \htw)$ for any $|\vep|<\vep_0$
and initial data $(u_0, u_1)\in C^1(\htw) \times C(\htw)$
 satisfying \eqref{data}.
\end{theorem}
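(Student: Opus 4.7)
The plan is to solve the Cauchy problem by a contraction mapping argument applied to the Duhamel formulation
\[
u(t,x) \;=\; u_L(t,x) \;+\; \int_0^t \mathcal{L}(t-s)\bigl[F_p(u(s,\cdot))\bigr](x)\,ds,
\]
where $u_L$ is the solution of the homogeneous shifted wave equation with initial data $\vep(u_0,u_1)$ and $\mathcal{L}(\tau)$ is the propagator associated to \eqref{nlw}. The first step is to record, from Lemma 2.1 of \cite{wang2023wave} and the earlier analyses of \cite{MR1443052,MR2902129}, a pointwise linear bound $|u_L(t,x)|\lesssim \vep\,\Phi(t,x)$, together with a matching retarded bound on $\mathcal{L}(\tau)$, where $\Phi$ denotes the natural decay profile of the shifted wave on $\htw$ for data of the form \eqref{data}; after the radial reduction $v=(\sinh r)^{1/2}u$ the shifted wave on $\htw$ becomes asymptotically a $1{+}1$-dimensional free wave, so $\Phi$ essentially combines the spatial weight $\theta_k$ with a $(1+|t|)^{-1/2}$ time decay along outgoing rays.

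Introduce the Banach space $X:=\{u\in C(\R\times\htw):\|u\|_X:=\sup_{(t,x)}\Phi(t,x)^{-1}|u(t,x)|<\infty\}$, in which the linear estimate gives $\|u_L\|_X\lesssim \vep$. The core of the proof is to show that
\[
\Psi(u)(t,x)\;:=\;u_L(t,x)+\int_0^t \mathcal{L}(t-s)F_p(u)\,ds
\]
is a contraction on the closed ball $B=\{u\in X:\|u\|_X\le C\vep\}$ for suitable $C$ and all small $\vep$. For $u\in B$ the nonlinearity obeys
\[
|F_p(u(s,y))|\;\lesssim\;|u|\bigl(\ln\tfrac{1}{|u|}\bigr)^{1-p}\;\lesssim\;C\vep\,\Phi(s,y)\Bigl(\ln\tfrac{1}{C\vep\,\Phi(s,y)}\Bigr)^{1-p},
\]
and one must verify that the Duhamel integral of the right-hand side against $\mathcal{L}(t-s)$ is bounded by $C\vep\,\Phi(t,x)$ times a factor vanishing as $\vep\to 0$. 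After pulling the uniform lower bound $\ln\vep^{-1}$ out of the logarithm, the problem reduces to the convergence of a weighted space-time integral of $\Phi$ against itself, weighted by the residual logarithm, whose borderline case of integrability is exactly $p=3$.

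The principal obstacle is this nonlinear estimate on $\htw$, for two reasons intrinsic to the even-dimensional setting. First, unlike the case $n=3$ of \cite{wang2023wave}, the shifted wave propagator on $\htw$ lacks a strong Huygens principle: the kernel of $\mathcal{L}(\tau)$ is supported on the full backward solid light cone, not just its boundary, and the retarded estimate must account both for the radial travelling-wave contribution and for the interior tail. This forces a careful splitting of the $(s,y)$-integration into a near region (where $\Phi$ is large and the logarithmic factor is close to $\ln\vep^{-1}$) and a far region (where the exponential spatial decay of $\theta_k$ along outgoing geodesics compensates for the weakened Huygens structure). Second, the logarithmic gain in $F_p$ is genuinely borderline, so the constants must be tracked meticulously to show that the divergence at $p=3$ forces the strict inequality $p>3$ rather than merely $p\ge 3$. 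Once this estimate and its companion Lipschitz counterpart for $F_p(u)-F_p(v)$ (which follows from the mean value theorem together with the monotonicity of $r\mapsto r(\ln 1/r)^{1-p}$ near $r=0$) are in hand, the contraction mapping principle yields a unique fixed point $u\in X$, which is the desired global weak solution in $C(\R\times\htw)$.
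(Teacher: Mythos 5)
Your overall skeleton---the Duhamel formulation \eqref{ie2}, a pointwise decay bound for the linear flow, and a contraction in a weighted $L^\infty_{t,x}$ space whose weight encodes that decay---is the same as the paper's. But the proposal stops exactly where the proof begins: the nonlinear Duhamel estimate, which is the entire content of Lemma \ref{mainlem} and Section \ref{global}, is only announced as ``the principal obstacle'', together with the unproved assertion that ``the borderline case of integrability is exactly $p=3$''. Nothing in your argument actually produces the exponent $3$: you do not specify the retarded kernel bound you would integrate against (the paper obtains it from the Beta-function identity of Lemma \ref{linear-tech}, which collapses the double integral $W(t,r,f)$ to a single $\lambda$-integral with weight $|a(r+\lambda)-a(t)|^{-1/2}$), nor do you carry out the decomposition near and inside the light cone (the variables $\alpha=\tau-\lambda$, $\beta=\tau+\lambda$ and the four integrals \eqref{fir}--\eqref{fou}), which is precisely where $p>3$ enters: once through the factor $\left(\ln\frac{1}{\vep}\right)^{3-p}$ generated by the solid interior of the cone (failure of Huygens in $n=2$), and once through the requirement that the interval $(1,p-2)$ of admissible weights be nonempty.

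There is also a concrete quantitative problem with the norm you propose. You take $\Phi$ to combine the spatial weight $\theta_k$ with a $(1+|t|)^{-1/2}$ decay along outgoing rays; but in the Duhamel integral one must integrate the weight in the retarded time variable, and $\langle\tau-\lambda\rangle^{-1/2}$ is not integrable, so with this profile the fixed-point estimate fails to close for \emph{every} $p$ (moreover the $t^{-1/2}$ rate is a Euclidean artifact: after the reduction $v=(\sinh r)^{1/2}u$ the limiting $1{+}1$ free wave produces no decay along rays, and what the data assumption \eqref{data} really yields, by Lemma \ref{formulae}, is exponential decay in $|t|-|x|$). The paper deliberately propagates the weaker weight $\Phi_h=e^{r/2}\langle t-r\rangle^{h}$ with $h\in(1,p-2)$: $h>1$ provides the needed time integrability, while $h\le p-2$ is the most decay the logarithmic nonlinearity can reproduce, and the compatibility of these two constraints is exactly the condition $p>3$. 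Your Lipschitz remark for $F_p(u)-F_p(v)$ is fine (it matches \eqref{nonlinear2} and the monotonicity argument in the paper), but as written the proposal is a plan for the proof rather than a proof: the weighted space--time estimate from which $p_c(2)=3$ emerges is missing, and the specific weight you chose would have to be replaced before it could be supplied.
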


On the other hand, we consider the problem of blow up for relatively small powers. It turns out that $p_c(2)=3$ for
\eqref{nlw} with \eqref{eq-nlterm} or more general \eqref{nonlinearity}, which is ensured by the following blow up result.

%
%

%
%
 
\begin{theorem}[Formation of singularity]\label{thm-bu}
Let $1<p<3$,
and $F$ be a $C^1(\R)$ function so that,
$F(0)=F'(0)=0$, 
\beeq\label{nonlinearity}
\left\{\begin{array}{l}
F(u)\gtrsim \left(\ln\frac{1}{|u|}\right)^{1-p}|u|,\ |u|\ll 1\ ,\\
F(u)\gtrsim |u|^{q},\ |u|\gtrsim 1\ ,
\end{array}
\right.
\eneq
 for some $q>1$.
Considering \eqref{nlw} with such
$F(u)$, 
$u_0=0$ and $u_1=u_1(\tau)$ which is a spherically symmetric nonnegative continuous compactly supported function satisfying 
$$u_1(\tau)\ge1\quad\tau\in[\tau_0,3\tau_0]$$
with some constant $\tau_0>0$,
for arbitrary $\vep>0$,  the corresponding weak solution will blow up in finite time.
\end{theorem}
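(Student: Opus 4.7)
The plan is to argue by contradiction, assuming a global nonnegative solution exists and then iteratively sharpening a pointwise lower bound on $u$ using the logarithmic regime of $F$ until $u$ becomes of order one somewhere, after which the superlinear regime $F(u)\gtrsim|u|^{q}$ drives a conventional blow-up.

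First I would show $u\ge u^L\ge 0$, where $u^L$ is the linear shifted-wave solution with data $(0,\vep u_1)$. This uses spherical symmetry, finite propagation speed (so that $u(t,\cdot)$ is supported in $\{r\le t+3\tau_0\}$), and the fact that the fundamental solution of $\pa_t^2-(\De_{\htw}+1/4)$ on $\htw$ is the nonnegative Poisson-type kernel $c\,\chi_{\{r<t\}}/\sqrt{\cosh t-\cosh r}$, combined with $F\ge 0$ on $[0,\infty)$ and a monotone Picard iteration of Duhamel's formula. Estimating this kernel directly against $u_1$ yields a bound of the schematic form $u^L(t,r)\gtrsim\vep e^{-t/2}$ on a forward-propagating annulus just behind the leading light cone, with the decay factor $e^{-t/2}$ reflecting the spectral shift $\rho^2=1/4$ on $\htw$.

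On this good region $u$ is still small, so the near-zero hypothesis of \eqref{nonlinearity} applies and gives $F(u(s,r))\gtrsim\vep e^{-s/2}\,s^{1-p}$ once $s$ exceeds $\ln(1/\vep)$. Feeding this back into Duhamel's formula, and exploiting the cancellation between the kernel's $e^{-(t-s)/2}$ factor and the bound's $e^{-s/2}$ factor, one obtains an iterated lower bound $u(t,r)\gtrsim\vep e^{-t/2}\Psi_k(t)$, where $\Psi_{k+1}$ satisfies a Volterra-type recursion $\Psi_{k+1}(t)\gtrsim\int_{T_k}^{t} s^{1-p}\Psi_k(s)\,ds$ modulo polynomial weights coming from the precise geometry of the $\htw$ kernel. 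A careful accounting of these weights shows that $\Psi_k(t)$ grows in $t$ with an exponent that increases with $k$ exactly when $p<3$, forcing $u(t_*,r_*)\gtrsim 1$ at some finite $t_*(\vep)$ and $r_*(\vep)$. From that point on the second branch of \eqref{nonlinearity} activates and comparison with $U''\gtrsim U^{q}$ along a characteristic curve closes the blow-up.

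The hard part is quantifying the iteration precisely enough that the critical exponent $p_c(2)=3$ emerges rather than a spurious larger value. In the three-dimensional case treated in \cite{wang2023wave} the substitution $\tilde u=\sinh r\cdot u$ reduces the radial shifted equation to a one-dimensional wave equation with a d'Alembert kernel, and the iteration becomes essentially an ODE analysis along characteristics. On $\htw$ there is no such reduction because Huygens' principle fails, so one must work directly with the singular-but-integrable kernel $1/\sqrt{\cosh t-\cosh r}$ on a solid two-dimensional region; tracking the polynomial factor in $t$ produced at each convolution step while handling the integrable endpoint singularity at $\cosh r=\cosh t$ is the technical heart of the argument.
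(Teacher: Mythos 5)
Your overall skeleton matches the paper's: positivity of the kernel to get $u\ge \vep I(t,\cdot,u_1)$, an initial lower bound of size $\vep e^{-r/2}$ near the light cone, use of the logarithmic branch of \eqref{nonlinearity} to convert that exponential smallness into a polynomial factor $\bigl(\ln\frac1\vep+t+r\bigr)^{1-p}$, an iteration that wins precisely when $p<3$, and finally a John-type argument with the power branch $F(u)\gtrsim|u|^q$. However, the step you yourself flag as ``the technical heart'' is genuinely missing, and the scheme you sketch in its place would not produce the critical exponent $3$. Your recursion $\Psi_{k+1}(t)\gtrsim\int_{T_k}^{t}s^{1-p}\Psi_k(s)\,ds$, run on a thin annulus just behind the light cone with the ansatz $u\gtrsim\vep e^{-t/2}\Psi_k(t)$, gains only one power of $t$ per step (from the Duhamel time integration) while losing $p-1$ powers from the logarithm, i.e.\ a net gain of $2-p$; this yields blow-up only for $p<2$. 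The paper's mechanism is different in two essential respects. First, the kernel lower bound (Lemma 2.2) reads $I(t,x,\phi)\ge C_0(\sinh r)^{-1/2}\int\phi(\lambda)(\sinh\lambda)^{1/2}d\lambda$: the exponential weight $(\sinh\lambda)^{1/2}$ of the far spheres exactly cancels the $e^{-\lambda/2}$ decay of $F(u)$ there, so contributions are collected without exponential penalty at \emph{small} $r$. Second, the iteration is carried out on the expanding interior regions $\Sigma_l=\{\tau-\lambda>6l\tau_0,\ \lambda>\tfrac12\tau_0\}$ with the two-weight ansatz $u\gtrsim c_l\vep\,\frac{r}{(\sinh r)^{1/2}}\bigl(t+r+\ln\frac1{c_l\vep}\bigr)^{-b_l}(t-r)^{a_l}$; integrating over the two-dimensional region $T^l_{r,t}$ gains $(t-r)^{2}$ per step while the logarithm costs $(t+r)^{-(p-1)}$, so $a_l-b_l$ increases by $3-p$ per step, which is exactly where $p_c(2)=3$ comes from. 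Consequently the solution becomes large not ``just behind the leading light cone'' (where it stays exponentially small forever) but at bounded $r$ and large $t$ (the region $Y$ with $r\approx\tau_0/2$, $t\approx T$), and only there does the power branch take over. Without the interior iteration and the $(\sinh\lambda)^{1/2}/(\sinh r)^{1/2}$ cancellation, your plan cannot reach $p<3$; the concluding ODE comparison $U''\gtrsim U^{q}$ is also left vague, though that part is standard and the paper's explicit iteration $A_{m+1}=qA_m$, $B_{m+1}=qB_m+2$, $D_{m+1}=C_0\delta_0D_m^q/B_{m+1}^2$ shows how to close it rigorously.
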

 
\begin{remark}
We want to point out that the initial data $(u_0,u_1)$ in Theorem \ref{thm-bu} can be any nontrivial $C_c^1\times C_c$ data on physical case $n=3$ in \cite{wang2023wave}, due to the strong Huyghens’ principle, which is not true in even dimensions.
\end{remark}


\subsection*{Organization of this paper} Our paper is organized as follows. We recall 
the fundamental dispersive estimate for the linear solution $u^0$ and 
illustrate the essential 
 formula for blowup in Section \ref{pre}. In Section \ref{global}, we prove the global existence by iteration, for any $p>3$, by exploiting the dispersive estimate.
The result for the formation of singularity, Theorem \ref{thm-bu}, is presented in Section \ref{blowup}. 
Finally, 
in the appendix, we present  some fundamental estimates used in Section \ref{pre} and Section \ref{global}.

\subsection*{Notation} 
\begin{itemize}
\item
We use $A\lesssim B$ to denote
$A\leq CB$ for some large constant C which may vary from line to
line and depend on various parameters, and similarly we use $A\ll B$
to denote $A\leq C^{-1} B$. We employ $A\sim B$ when $A\lesssim
B\lesssim A$.
\item $d(x,y)$ is the geodesic distance between $x,y$ in $\htw$, and if $x$ is the origin $O$, we denote $|y|=d(O,y)$.
\item $S_t(x):=\{y\in\htw,d(x,y)=t\}$ denotes the hyperbolic sphere with center $x$ and radius $t$.
\item $M^r f(x)=\frac{1}{|S_r(x)|}\int_{S_r(x)}f(y)d\sig_y$ denotes the spherical mean of $f$ over $S_r(x)$. If the center is the origin $O$, for any function $w(t,x)$ with parameter $t$, we simply denote $\wt{w}(t,r)
:=M^r(w(t, \cdot))(O)$. 
\item we denote $\<t\>=(1+t^2)^{\hf}$, $x\vee y=\max(x,y)$, $x\wedge y=\min(x,y)$.
\end{itemize}

\section{Preliminary}\label{pre}

Inside the forward light cone of the Minkowski space $\La=\{(\tau,z)\in \R^{1,n}: |z|<\tau\}$, we introduce coordinates 
\beeq\label{varitrans}
s=|z|,\ \tau=e^t\cosh r,\ s=e^t\sinh r,\ r\in[0,\infty),\ t\in\R.
\eneq
Viewing $\hn$ as
the embedded spacelike hypersurface with $t=0$, we have
the natural metric $g_{\hn}=dr^2+(\sinh r)^2d\om^2$, induced from the Minkowski metric $g=-d\tau^2+dz^2=-d\tau^2+ds^2+s^2d\om^2$, where  $\om\in\ms^{n-1}$. This illustrates that $(r,\om)$ is the natural geodesic polar coordinates in $\hn$.

Considering the linear wave equations \beeq\label{wave}
\begin{cases}
\partial_{t}^2 u-(\De_{\hn}+\rho^2)u=F\ , x\in\hn\ ,\\
u(0,x)= u_0, u_t(0,x)= u_1\ ,
\end{cases}
\eneq
Duhamel's principle tells us that
 \eqref{wave} is equivalent to the integral equation
\beeq\label{ie}
u(t,x)= u^0(t,x)+(LF)(t,x)= u^0(t,x)+\int_0^t I(\tau, x, F(t-\tau))d\tau,
\eneq
where $u^0= \pt I(t,x, u_0)+  I(t,x,u_1)$, and $I(t,x,u_1)$ is the solution for the linear homogeneous equation with data $u_0=0$.

In the proof of Theorem \ref{thm-sdge},
a fundamental result to be used is the following a priori dispersive estimate for the linear solution $u^0$, which is available in \cite[Theorem 6]{MR1443052} with \cite[Theorem 3.1]{MR1295101}.
For completeness, we present a proof.

\begin{lem}[Linear estimates]\label{formulae}
Let $n=2$ and $k>0$,
there exists $N_k>1$ so that
we have the estimate 
\beeq\label{eq-linear}
|u^0(t,x)|\le  N_k(\cosh |x|)^{-\hf}(\cosh (|t|-|x|))^{-\hf}K_k(|t|-|x|)\ ,
\eneq
for any solutions to  \eqref{wave} with $F=0$ and the initial data $(u_0, u_1)\in C^1\times C$ satisfying \eqref{data}. And the factor $K_k$ is defined by
$$K_k(s)=\begin{cases}(\cosh s)^{-k+\hf}&0<k<\hf\\
\<s\> &\quad k=\hf\\1&\quad k>\hf\end{cases}.$$
\end{lem}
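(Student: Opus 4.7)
The plan is to start from an explicit representation for $u^0$ and then estimate it directly, exploiting the decay of the data. Writing $u^0=\partial_t I(t,x,u_0)+I(t,x,u_1)$ as in \eqref{ie}, I would first reduce the whole estimate to bounding $I(t,x,w)$ under $|w(y)|\le \theta_k(y)=(\cosh|y|)^{-k-1/2}$: the $\partial_t I(t,x,u_0)$-term is recast, via the representation below and commutation of $\partial_t$ with the spherical mean, as an integral in which the time derivative is transferred to the data, producing pieces bounded either by $u_0$ or by $\nabla u_0$, both of which are controlled by $\theta_k$ thanks to \eqref{data}.

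For $I(t,x,w)$ itself, I would use the classical spherical-mean representation of the shifted wave equation on $\htw$ (see \cite[Theorem 3.1]{MR1295101} or \cite{MR1443052}),
\[
I(t,x,w)=C\int_0^{|t|}\frac{\sinh s}{\sqrt{\cosh t-\cosh s}}\,M^s w(x)\,ds,
\]
and analyse the spherical mean pointwise. Parameterising $S_s(x)$ by the angle $\phi\in[0,\pi]$ at the origin and using the hyperbolic law of cosines $\cosh|y|=\cosh|x|\cosh s-\sinh|x|\sinh s\cos\phi$, a half-angle substitution converts
\[
|M^s w(x)|\lesssim \int_0^\pi \bigl(\cosh|x|\cosh s-\sinh|x|\sinh s\cos\phi\bigr)^{-k-1/2}\,d\phi
\]
into a one-parameter integral that I would estimate, by splitting according to whether $\sinh|x|\sinh s$ dominates $\cosh(|x|-s)$ or not, by a bound of the form
\[
|M^s w(x)|\lesssim (\cosh(|x|+s))^{-1/2}(\cosh(|x|-s))^{-k}.
\]
The crucial point is the extra $(\cosh(|x|+s))^{-1/2}$-gain coming from the angular averaging.

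Plugging this into the representation and performing the substitution $\sigma=|t|-s$, which concentrates the singularity of $(\cosh t-\cosh s)^{-1/2}$ near the light cone, the kernel $\sinh s/\sqrt{\cosh t-\cosh s}$ contributes the boundary-layer factor $(\cosh|x|)^{-1/2}(\cosh(|t|-|x|))^{-1/2}$. The remaining integration of $(\cosh||x|-s|)^{-k}$ in $s$ then splits into three regimes dictated by the convergence of $\int_0^\infty(\cosh\sigma)^{-2k}\,d\sigma$: bounded when $k>1/2$, linear in $\langle|t|-|x|\rangle$ when $k=1/2$, and exponentially growing like $(\cosh(|t|-|x|))^{1/2-k}$ when $0<k<1/2$. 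These three cases match the three branches in the definition of $K_k$ and produce \eqref{eq-linear}. The main obstacle is the sharp pointwise bound for $M^s w$: the naive $L^\infty$-estimate $|M^s w(x)|\le(\cosh||x|-s|)^{-k-1/2}$ loses a full factor of $(\cosh(|x|+s))^{1/2}$ and destroys the $(\cosh|x|)^{-1/2}$-prefactor required by \eqref{eq-linear}, so one must honestly exploit the angular integral via the hyperbolic law of cosines; once this step is secured, the remainder of the proof is a routine integration in $s$.
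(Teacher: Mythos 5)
Your proposal is correct in substance, and its key idea coincides with the heart of the paper's proof: the pointwise spherical-mean bound $M^s\theta_k(x)\lesssim(\cosh(|x|+s))^{-\hf}(\cosh(|x|-s))^{-k}$ that you extract from the angular averaging is exactly the paper's claim \eqref{eq-linear-claim}, and you rightly identify that the naive bound $(\cosh(||x|-s|))^{-k-\hf}$ loses the indispensable $(\cosh|x|)^{-\hf}$ prefactor. Where you diverge is in execution. You derive the mean bound from the hyperbolic law of cosines $\cosh|y|=\cosh|x|\cosh s-\sinh|x|\sinh s\cos\phi$ (note the angle $\phi$ sits at $x$, not at the origin) and a splitting according to whether $\sinh|x|\sinh s$ dominates $\cosh(|x|-s)$, whereas the paper starts from the explicit Koornwinder-type formula \eqref{radialmean} and splits the $\la$-integral near its two endpoints; the two routes are equally elementary and give the same estimate. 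For the time integration you plug the mean bound directly into the one-dimensional Duhamel kernel and integrate in $s$, splitting near and away from the light cone; the paper instead first interchanges the order of integration and collapses the $s$-integral exactly via the Beta-function identity (Lemma \ref{linear-tech}), arriving at \eqref{I}, which streamlines the bookkeeping in the case $t>|x|$ where the denominator changes sign. Your direct integration does close, but it requires the case analysis $|x|\ge t$ versus $t>|x|$ and, after factoring out $(\cosh|x|)^{-\hf}$, the governing integral is $\int(\cosh\sigma)^{\hf-k}d\sigma$ (not $\int(\cosh\sigma)^{-2k}d\sigma$ as you wrote), whose convergence threshold $k=\hf$ indeed reproduces the three branches of $K_k$; this is a bookkeeping slip, not a gap. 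Finally, for the $\pa_tI(t,x,u_0)$ term, "transferring the derivative to the data" cannot be done by differentiating under the integral sign (that produces a nonintegrable $(\cosh t-\cosh s)^{-3/2}$ singularity); one must first integrate by parts in $s$, which is precisely the content of the paper's Lemma \ref{partial-tech}, after which the boundary term is controlled by $M^t|u_0|$ with a harmless factor $\cosh(t/2)(\cosh t)^{-\hf}\lesssim 1$ and the bulk term by $M^t|\nabla u_0|\le M^t\theta_k$ via \eqref{data}, exactly as you intend.
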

\begin{proof}
Without loss of generality, we assume $t>0$. At first, we recall that 
$u^0(t,x)=\partial_tI(t, x, u_0)+I(t, x, u_1)$
on $(0,\infty)\times\htw $ with
\beeqa\label{mean}
\begin{split}
I(t,x, u_1)&=\frac{2}{\omega_2}(2\cosh t-2\cosh|\cdot|)_+^{-\hf}*u_1(x)\\
&=\int_0^t\frac {\sinh s} {\sqrt{2\cosh t-2\cosh s}}M^s u_1(x)ds,
\end{split}
\eneqa
where $\frac{2}{\omega_2}(2\cosh t-2\cosh|\cdot|)_+^{-\hf}$ is the fundamental solution (see \cite[Chapter 8 (5.14)]{MR2743652}), and $\omega_n=\frac{2\pi^{(n+1)/2}}{\Gamma(\frac{n+1}{2})}$ is the volume of n-dimensional sphere. 

 
Owing to $|u_1(x)|\le \theta_k(x)=(\cosh |x|)^{-k-\hf}$, we have 
$$|I(t,x,u_1)|\le I(t, x,\theta_k)=\int_0^t\frac {\sinh s} {\sqrt{2\cosh t-2\cosh s}}(M^s \theta_k)(x)ds\ ,$$
where
\beeq\label{radialmean}
\begin{split}
(M^t \theta_k)(x)
&=\frac{1}{\pi}\int_{|r-t|}^{r+t}\frac{\theta_k\sinh \la}{(\cosh(r+t)-\cosh\la)^{\hf}(\cosh\la-\cosh(r-t))^{\hf}} d\la\\
&=\frac{1}{\pi}\int_{|r-t|}^{r+t}\frac{(\cosh\la)^{-k-\hf}\sinh \la}{(\cosh(r+\la)-\cosh t)^{\hf}(\cosh t-\cosh(r-\la))^{\hf}} d\la\ ,
\end{split}
\eneq
with $|x|=r$. One can find the derived formula in \cite[(3.5)]{MR1295101}, 
which is the parallel formula of the spherical mean function on Euclidean spaces. 

By Lemma \ref{linear-tech}, we take $a(s)=2\cosh s$, $f(\la)=(\cosh\la)^{-k-\hf}\sinh \la$, and it is easy to verify $a$ satisfies conditions of Lemma \ref{partial-tech}. Then we have
\beeq\label{I}
I(t, x,\theta_k)=\frac1 \pi W(t,r,f)\le\begin{cases}
\int_{r-t}^{r+t}\frac{(\cosh\la)^{-k-\hf}\sinh \la}{(2\cosh(r+\la)-2\cosh t)^{\hf}}d\la\quad r\ge t\\
\int_0^{t+r}\frac{(\cosh\la)^{-k-\hf}\sinh \la}{|2\cosh(r+\la)-2\cosh t|^{\hf}}d\la\quad t>r
\end{cases}.
\eneq
Observe the connection between integrals of $M^t\theta_k(x)$ and \eqref{I}, we claim that
\beeq\label{eq-linear-claim}
(M^t \theta_k)(x)\le N_k(\cosh (t+r))^{-\hf}(\cosh (t-r))^{-k}\ ,
\eneq
and it is strong enough to conclude \eqref{eq-linear}. Actually, when $u_0=0$, on the one hand, it is directly true that
\beeq\label{part}
\int_{|r-t|}^{r+t}\frac{(\cosh\la)^{-k-\hf}\sinh \la}{(2\cosh(r+\la)-2\cosh t)^{\hf}}d\la\le(\cosh t)^{\hf}M^t\theta_k(x)\le N_k(\cosh r)^{-\hf}(\cosh (t-r))^{-k}.
\eneq
On the other hand, for $t>r$, without loss of generality, we consider $t-r\gg 1$. With \eqref{part} and
\beeq\label{coshd}
\cosh b\cosh c\le\cosh(b+c)\le2\cosh b\cosh c\quad (b,c>0)\ ,
\eneq
the rest of the last integral in \eqref{I}
turns to
\beqa
&&\int_0^{t-r} \frac{(\cosh\la)^{-k-\hf}\sinh \la}{(\cosh t-\cosh (r+\la))^{\hf}}d\la\\
&\le&N(\cosh t)^{-\hf}\int_0^{t-r-1}(\cosh\la)^{-k-\hf}\sinh \la d\la\\
&&+(\cosh(t-r-1))^{-k-\hf}\int_{t-r-1}^{t-r}\frac{\sinh\la}{\sinh(r+\la)}\frac{\sinh(r+\la)}{(\cosh t-\cosh (r+\la))^{\hf}}d\la\\
&\le&N_k(\cosh t)^{-\hf}K_k(t-r)+(\cosh r)^{-1}(\cosh(t-r-1))^{-k-\hf}(\cosh t)^{\hf}\\
&\le&N_k(\cosh r)^{-\hf}(\cosh(t-r))^{-\hf}K_k(t-r).
\eeqa

For the case with $u_1=0$, by taking $a(s)=2\cosh s$, $v(t,x)=M^tu_0(x)$ in Lemma \ref{partial-tech}, we deduce
$$|\pa_tI(t, x, u_0)|\le\frac{\sinh t}{\sinh \frac t 2} (M^t|u_0|)(x)+R|\pa_t(M^tu_0)|(t,x).$$
By \eqref{eq-linear-claim}, we only need to control $\pt (M^t u_0)(x)$, for which we introduce
a Lorentz boost $\psi_x\in SO(1,2)$ such that $\psi_x(O)=x$ and preserves the metric. It is known that $\psi_x S_t(O)=S_t(x)$ 
 and for fixed $x, y$, $\psi_x (t,y)=(r_x(t,y),\om_x(t,y))=\gam_{x,y}(t)$ is a geodesic curve with
$|\pt\gam_{x,y}(t)|=1$. Then 
\beqa
\partial_t(M^t u_0)(x) &=&\frac{1}{2\pi}\int_{S_1(O)} \partial_t(u_0(\psi_x(t,y))) d\sig_y\\
&=&\frac{1}{2\pi}\int_{S_1(O)} \partial_t(u_0(r_x(t,y),\om_x(t,y))) d\sig_y\\
&=&\frac{1}{2\pi}\int_{S_1(O)} \<\nabla u_0, \pt \gam_{x,y}(t)\>_{g_{\htw}} 
d\sig_y\ ,
\eeqa
and also
$$|\partial_t(M^t u_0)(x)|
\le\frac{1}{2\pi}\int_{S_1(O)} |\nabla u_0(\gam_{x,y}(t))|d\sig_y
\le M^t( \theta_k)(x)\ ,
$$
thanks to the assumption \eqref{data}.

To conclude the proof, we prove \eqref{eq-linear-claim}. We only consider the nontrivial case $r+t\gg1$. Without loss of generality, we set $r+t>|r-t|+2$, and then we have
\beqa
&&\int_{|r-t|}^{r+t}\frac{(\cosh\la)^{-k-\hf}\sinh \la}{(\cosh(r+t)-\cosh\la)^{\hf}(\cosh\la-\cosh(r-t))^{\hf}} d\la\\
&=&\int_{|r-t|}^{|r-t|+1}+\int_{|r-t|+1}^{t+r-1}+\int_{t+r-1}^{t+r}\\
&\le&2(\cosh(r+t))^{-\hf}(\cosh(r-t))^{-k-\hf}\int_{|r-t|}^{|r-t|+1}\frac{\sinh \la}{(\cosh\la-\cosh(r-t))^{\hf}}d\la\\
&&+2(\cosh(r+t))^{-\hf}\int_{|r-t|+1}^{t+r-1}(\cosh\la)^{-k-1}\sinh \la d\la\\
&&+N_k(\cosh(r+t))^{-k-1}\int_{t+r-1}^{t+r}\frac{\sinh \la}{(\cosh(r+t)-\cosh\la)^{\hf}}d\la\\
&\le&N_k(\cosh(r+t))^{-\hf}(\cosh(r-t))^{-k}.
\eeqa

\end{proof}

	Finally, the following estimate is useful for iterating the lower bound of solutions in the proof of Theorem \ref{thm-bu}.
\begin{lem}\label{iterforbu}
Assume that $\phi\in C(\htw)$ is a nonnegative spherically symmetric function, and then for any fixed constant $\tau_0>0$, there exist a constant $0<C_0\le1$ depending only on $\tau_0$, such that 
\beeq\label{ldomain}
I(t,x,\phi)\ge\frac{C_0}{(\sinh r)^{\hf}}\int_{t\vee r}^{t+r}\phi(\la)(\sinh\la)^{\hf}d\la
\eneq
holds for $|x|=r>\hf\tau_0$.
Particularly,  for $|x|=r>\hf\tau_0$ and $|t-r|>\frac18\tau_0$, we have
\beeq\label{sdomain}
I(t,x,\phi)\ge\frac{C_0}{(\sinh r)^{\hf}}\int_{|t-r|}^{t+r}\phi(\la)(\sinh\la)^{\hf}d\la.
\eneq

\end{lem}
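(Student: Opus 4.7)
My plan is to start from the explicit representation \eqref{mean} combined with \eqref{radialmean}, which, since $\phi\ge 0$, rearranges by Fubini into
$$I(t,x,\phi)=\frac{1}{\pi}\int\phi(\la)\sinh\la\cdot K(\la)\,d\la, \quad K(\la):=\int_{|\la-r|}^{\min(t,\la+r)}\frac{\sinh s\,ds}{\sqrt{(2\cosh t-2\cosh s)(\cosh(r+s)-\cosh\la)(\cosh\la-\cosh(r-s))}}.$$
Whenever $\la\ge|t-r|$ one has $\la+r\ge t$, so the upper endpoint of the $s$-integral collapses to $t$; when in addition $\la\ge t\vee r$ the lower endpoint becomes $\la-r$. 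Both \eqref{ldomain} and \eqref{sdomain} will therefore follow once I establish the pointwise bound $K(\la)\ge\pi C_0/\sqrt{\sinh r\,\sinh\la}$ on the respective $\la$-ranges.

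To obtain this lower bound I would apply the half-angle identity $\cosh a-\cosh b=2\sinh\tfrac{a+b}{2}\sinh\tfrac{a-b}{2}$ to each of the three $\cosh$-differences, rewriting the radicand as a product of six $\sinh$-factors, and then substitute $s=\la-r+u$ with $u\in[0,T]$, $T=t+r-\la$. Under this change of variables the two factors that can vanish in the integration range become exactly $\sinh(u/2)$ and $\sinh\tfrac{T-u}{2}$, producing the integrable kernel $[u(T-u)]^{-1/2}$; the remaining four factors together with the numerator $\sinh s=\sinh(\la-r+u)$ carry the exponential scaling. The hypothesis $r>\tau_0/2$ keeps $\sinh(r-u/2)$ bounded below, the condition $\la\ge t\vee r$ keeps $\sinh(\la-r+u/2)$ and $\sinh(\la+u/2)$ bounded below at the right scale, and $\sinh x\le\tfrac12 e^x$ supplies clean upper bounds for all of them. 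A direct exponential count then extracts a pointwise factor of size $1/\sqrt{\sinh r\,\sinh\la}$, and $\int_0^T du/\sqrt{u(T-u)}=\pi$ produces the universal constant, giving \eqref{ldomain}.

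For \eqref{sdomain} I extend the argument to the strip $\la\in[|t-r|,t\vee r]$, on which the $s$-interval is still $[|\la-r|,t]$. The substitution is now $s=|\la-r|+u$, $u\in[0,t-|\la-r|]$, and the same factorization applies, with $\sinh(\la-r+u/2)$ replaced by its $\la<r$ counterpart; the hypothesis $|t-r|>\tau_0/8$ is exactly what I use to keep this analogue, and the related non-vanishing factor $\sinh\tfrac{t-|\la-r|+\ldots}{2}$, uniformly bounded below, so $C_0$ depends only on $\tau_0$. The main technical obstacle is this final bookkeeping: one must verify that, after pairing the two singular $\sinh$-factors with $\sinh s$ into the symmetric kernel $[u(T-u)]^{-1/2}$, the four remaining $\sinh$-factors combine to produce precisely the scaling $1/\sqrt{\sinh r\,\sinh\la}$ with constants uniform in $(t,r,\la)$ within the admissible region --- in particular, one must carefully track that the exponential growth in $\la$ and $r$ in the numerator cancels the one in the denominator so that only the claimed half-powers remain.
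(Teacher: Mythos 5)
Your Fubini reduction to a kernel bound of the form $K(\la)\ge \pi C_0/\sqrt{\sinh r\,\sinh\la}$ is sound and is in substance the same target the paper reaches, but the way you propose to prove that bound has a genuine gap. After the half-angle factorization and the substitution $s=\la-r+u$, the six factors are $\sinh\frac u2$, $\sinh\frac{T-u}2$, $\sinh\frac{t+\la-r+u}2$, $\sinh(\la+\frac u2)$, $\sinh(r-\frac u2)$, $\sinh(\la-r+\frac u2)$. In the bulk of the admissible region an exponential count gives $\bigl[\sinh\frac{t+\la-r+u}2\,\sinh(\la+\frac u2)\,\sinh(r-\frac u2)\,\sinh(\la-r+\frac u2)\bigr]^{1/2}\sim e^{\frac t4+\frac{5\la}4-\frac r4+\frac u2}$, while $\sqrt{\sinh r\sinh\la}\sim e^{\frac r2+\frac\la2}$, and pairing $\sinh s=\sinh(\la-r+u)$ with the two singular factors yields at best a prefactor $\sim e^{\la-r+u-\frac T4}$ in front of $[u(T-u)]^{-1/2}$. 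The net constant you can extract pointwise is therefore of size $e^{\frac u2-r-\frac t2}$, which equals the target $e^{-\frac r2-\frac\la2}$ only at $u=T$ and is short by $e^{-(T-u)/2}$ elsewhere; so the uniform-in-$u$ bound you intend to integrate against $[u(T-u)]^{-1/2}$ is false whenever $T=t+r-\la$ is large. Concretely, at $t=r$, $\la=r$, $u=T/2=r/2$ the exact integrand is $\sim e^{-\frac{5r}4}$, while your claimed pointwise lower bound there is $\sim r^{-1}e^{-r}$. The estimate $K(\la)\gtrsim(\sinh r\sinh\la)^{-1/2}$ is nevertheless true, because the $u$-integral carries its mass in an $O(1)$ neighbourhood of $u=T$ (i.e.\ $s$ near $t$); that cannot be captured by a uniform pointwise factor times $\int_0^T[u(T-u)]^{-1/2}du=\pi$. (A smaller inaccuracy: $\sinh(\la-r+\frac u2)$ is not bounded below on the admissible set — it degenerates when $\la=t\vee r=r$ and $u\to0$ — and lower bounds on denominator factors are in any case the wrong direction for a lower bound on $K$.)

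Two repairs are possible. Either localize the $u$-integration to $u\in[T-\min(1,T),\,T]$, where $\sinh\frac{T-u}2\sim\frac{T-u}2$ contributes $O(1)$ mass and the exponential count does close; or, more cleanly, follow the paper's route: use the symmetry $(\cosh(r+s)-\cosh\la)(\cosh\la-\cosh(r-s))=(\cosh(r+\la)-\cosh s)(\cosh s-\cosh(r-\la))$ to move the non-degenerate factor onto $\cosh(r+\la)-\cosh s\le 2\cosh(r+\la)\le 4\cosh r\cosh\la$, a bound that is constant in $s$, and then observe that the remaining $s$-integral $\int_{|r-\la|}^{t} a'(s)\,[(a(t)-a(s))(a(s)-a(r-\la))]^{-1/2}\,ds$ with $a=2\cosh$ is exactly a Beta integral equal to $\pi$ (as in Lemma \ref{linear-tech}), with no exponential loss; the hypotheses $r>\hf\tau_0$ and $\la\ge t\vee r$ (resp.\ $\la\ge|t-r|>\frac18\tau_0$) then enter only through $(\tanh r\tanh\la)^{1/2}\gtrsim 1$, which is exactly how the paper produces $C_0(\tau_0)$.
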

\begin{proof}
Based on \eqref{mean} and \eqref{radialmean}, we can rewrite $I(t,x,\phi)$ with $a(s)=2\cosh s$ as 
\begin{equation*}
\begin{split}
I(t,r,\phi)&=\frac{1}{\pi}\int_0^t\int_{|r-s|}^{r+s}\frac{\phi(\la)\sinh\la a^{\prime}(s)d\la ds}{(a(t)-a(s))^{\hf}(a(r+s)-a(\la))^{\hf}(a(\la)-a(r-s))^{\hf}}\\
&=\frac{1}{\pi}\int_0^t\int_{|r-s|}^{r+s}\frac{\phi(\la)\sinh\la a^{\prime}(s)d\la ds}{(a(t)-a(s))^{\hf}(a(r+\la)-a(s))^{\hf}(a(s)-a(r-\la))^{\hf}}\ .
\end{split}
\end{equation*}
According to the proof of Lemma \ref{linear-tech}, we obtain
\beeqa
\nonumber I(t,r,\phi)&\ge&\frac{1}{\pi}\int_{|r-t|}^{r+t}\frac{\phi(\la)\sinh\la}{(a(r+\la))^{\hf}}\int_{|r-\la|}^t\frac{a^{\prime}(s)dsd\la}{(a(t)-a(s))^{\hf}(a(s)-a(r-\la))^{\hf}}\\
\label{lowerbd}&=&\int_{|r-t|}^{r+t}\frac{\phi(\la)\sinh\la}{(2\cosh (r+\la))^{\hf}}d\la\ .
\eneqa

For  $r>\hf\tau_0$, it is easy to conclude \eqref{ldomain} from \eqref{lowerbd}
with $(\tanh\la\tanh r)^{\hf}\ge2C_0$ for $r,t\vee r>\frac18\tau_0$.

Particularly, for $r>\hf\tau_0$ and $|t-r|>\frac18\tau_0$, we have \eqref{sdomain} directly from \eqref{lowerbd} with $(\tanh\la\tanh r)^{\hf}\ge2C_0$ for $r,|t-r|>\frac18\tau_0$.

\end{proof}


\section{Global existence}\label{global} 

In this section, we give the proof of Theorem \ref{thm-sdge}, for which we rewrite \eqref{nlw} into the following integral equation
\beeq\label{ie2}
u(t,x)= \vep u^0(t,x)+(LF_p(u))(t,x)= \vep u^0(t,x)+\int_0^t I(t-\tau, x, F_p(u(\tau, \cdot))d\tau,
\eneq
where $u^0= \pt I(t,x, u_0)+  I(t,x,u_1)$ is the homogeneous solution with data $(u_0, u_1)$.
By Lemma \ref{formulae}, we have
\beqa
|u^0(t,x)|\le   N_k(\cosh |x|)^{-\hf}(\cosh (t-|x|))^{-\hf}K_k(t-|x|)\ ,
\eeqa
for any
$u_0\in C^1$, $u_1\in C$ satisfying \eqref{data}.
Then for any $h>0$, there exists a constant $N_h>N_k$ such that
\beeq\label{linear}
|u^0(t,x)|\le  
\frac{N_h}{e^{r/2}\<t-r\>^{h}}
:=\frac{  N_h}{\Phi_h(t,r)}\ ,
\eneq
with $|x|=r$ and the fact of 
\beqa
\frac 12  e^{|t|}\le \cosh t\le e^{|t|},
\forall t\in\R\ .
\eeqa


For fixed $h>0$ to be specified later,
the global existence of the solution $u$ of \eqref{ie2} will be proved by iteration, for which 
we define the (complete) solution space and the 
solution map
 $T u$ as follows
$$X_{\vep} = \{u\in C([0,\infty)\times \htw): \|u\|:=\|\Phi_hu\|_{L^\infty_{t,x}}\le 2\vep N_h\}\ ,$$
$$(T u)(t,x)=\vep u^0+LF_p(u)\ .$$
Then the proof is reduced to the following key nonlinear estimates
in light of Banach's contraction principle, and 
the process has been detailed 
in our previous paper \cite{wang2023wave}.

\begin{lem}[Nonlinear estimates]\label{mainlem}
Let $p>3$, $h\in(1,p-2)$ and
$F_p$ be the $C^1$ function satisfying \eqref{nonlinear}. There exists $\vep_0>0$ so that
for any $\vep\in (0, \vep_0]$,
we have
\beeqa\label{compressed}
\|LF_p(u)-LF_p(v)\|\le \frac 12 \|u-v\|, \forall u, v\in X_\vep\ .
\eneqa
\end{lem}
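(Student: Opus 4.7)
The plan is to reduce \eqref{compressed} to the linear dispersive estimate \eqref{eq-linear} via a pointwise bound on $F_p(u)-F_p(v)$, with the logarithmic structure of the nonlinearity supplying the small factor needed for the contraction.

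First I would extract a weighted pointwise control of the difference. By the mean value theorem together with $F_p(0)=F_p'(0)=0$ and \eqref{nonlinear}, for each $(\tau,y)$ at which $|u(\tau,y)|,|v(\tau,y)|\ll 1$ there is $\xi$ between $u(\tau,y)$ and $v(\tau,y)$ with
$$|F_p(u)-F_p(v)|(\tau,y)\lesssim \bigl(\ln(1/|\xi|)\bigr)^{1-p}\,|u-v|(\tau,y);$$
the smallness of $\xi$ is ensured by choosing $\vep_0$ small enough that $u,v\in X_\vep$ forces $|u|,|v|\le 2\vep N_h$ everywhere. The $X_\vep$ bound also gives $|\xi|\le 2\vep N_h\,\Phi_h(\tau,|y|)^{-1}$, hence
$$\ln(1/|\xi|)\ge \ln(1/(2\vep N_h))+|y|/2+h\ln\<\tau-|y|\>.$$
Combined with $|u-v|(\tau,y)\le \|u-v\|\,e^{-|y|/2}\<\tau-|y|\>^{-h}$ and the fact that $1-p<0$, this yields
$$|F_p(u)-F_p(v)|(\tau,y)\lesssim \|u-v\|\cdot \bigl(\ln(1/\vep)\bigr)^{1-p}\cdot e^{-|y|/2}\<\tau-|y|\>^{-h}.$$

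Next I would propagate this estimate through the Duhamel integral in \eqref{ie2} with the aid of Lemma \ref{formulae}. The spatial factor $e^{-|y|/2}$ matches the borderline decay of the data in Lemma \ref{formulae}, while the cone-localizing weight $\<\tau-|y|\>^{-h}$ provides the extra decay needed to close the $K_k$-factor at $k$ slightly positive. Inserting the spherical-mean representation from the proof of Lemma \ref{formulae} into $I(t-\tau,x,\cdot)$ and integrating in $\tau\in[0,t]$, the condition $h>1$ ensures absolute convergence of the time integral, while the condition $h<p-2$ lets the logarithmic gain absorb the resulting loss in the $\Phi_h$-level weight. The outcome should take the form
$$|L(F_p(u)-F_p(v))(t,x)|\lesssim \frac{C(p,h)\,(\ln(1/\vep))^{1-p}}{\Phi_h(t,|x|)}\,\|u-v\|,$$
so that choosing $\vep_0$ small enough that $C(p,h)\,(\ln(1/\vep_0))^{1-p}\le 1/2$ gives \eqref{compressed}.

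The principal obstacle is the propagation step itself: carefully tracking the cone-localizing factor $\<\tau-|y|\>^{-h}$ through the spherical-mean representation of $I(t-\tau,x,\cdot)$ and the $\tau$-integral so as to recover exactly $\<t-|x|\>^{-h}$ on the image. This is the two-dimensional analogue of the bookkeeping argument carried out in \cite{wang2023wave} using the 3D descent formula; the present scheme applies verbatim up to the linear input, with the estimate \eqref{eq-linear} replacing the explicit 3D representation. One anticipates no geometric subtlety beyond a case split according to the sign of $t-|x|$ and the location of the sphere $S_\sigma(x)$ relative to the origin, all of which is subsumed by the technical lemmas collected for Lemma \ref{formulae} in the appendix.
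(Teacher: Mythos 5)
There is a genuine gap at the very first reduction. After noting $\ln(1/|\xi|)\ge \ln(1/(2\vep N_h))+|y|/2+h\ln\<\tau-|y|\>$, you discard the $|y|/2$ and $\<\tau-|y|\>$ contributions and work with the frozen bound $|F_p(u)-F_p(v)|\lesssim (\ln(1/\vep))^{1-p}e^{-|y|/2}\<\tau-|y|\>^{-h}\|u-v\|$. But the Duhamel integral of a source of size $e^{-\la/2}\<\tau-\la\>^{-h}$ does \emph{not} reproduce the weight $\Phi_h^{-1}=e^{-r/2}\<t-r\>^{-h}$, so the asserted outcome $|L(F_p(u)-F_p(v))|\lesssim C(p,h)(\ln(1/\vep))^{1-p}\Phi_h^{-1}\|u-v\|$ does not follow — and is in fact false. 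Quantitatively: after the kernel reduction (Lemma \ref{linear-tech} together with \eqref{funda}), in the exterior region $r>t$ what must be bounded by $\<t-r\>^{-h}$ is $\int_{r-t}^{r+t}\int_0^{t-|r-\la|}\<\tau-\la\>^{-h}\,d\tau\,d\la$, which in the regime $r=2t\to\infty$ is bounded below by $c\,\<r-t\>^{2-h}\gg\<r-t\>^{-h}$ for every $h$; in the interior region the analogue of \eqref{fou} becomes $\int_{t-r+1}^{t+r}\int_{-\be}^{t-r}\<\al\>^{-h}\,d\al\,d\be\gtrsim r$, which is not $O(\<t-r\>^{-h})$ uniformly in $r$. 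Because the fundamental solution is positive (see the lower bound \eqref{lowerbd} used in Lemma \ref{iterforbu}), these are lower bounds for the actual Duhamel term with such a source, so no choice of $\vep_0$ rescues the contraction from your intermediate estimate: the failure is of the claimed inequality itself, not merely of the bookkeeping.

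The missing idea — and the heart of the paper's proof — is to \emph{keep} the spatial growth inside the logarithm: one retains $|F_p(u)-F_p(v)|\lesssim(\ln\frac1\vep+r)^{1-p}\<t-r\>^{-h}(\cosh r)^{-\hf}\|u-v\|$, so that the source in the Duhamel integral carries the factor $(\ln\frac1\vep+\la)^{1-p}$. It is exactly this polynomial decay in $\la$ (the conversion of exponential spatial decay into polynomial decay via the logarithm) that makes the $\la$-integration (equivalently the $\be=\tau+\la$-integration) convergent and supplies the output weight: on the problematic regions one has $\la\gtrsim|t-r|$, so integrating produces $(\ln\frac1\vep+|t-r|)^{2-p}$, and this is $\ll\<t-r\>^{-h}$ with a small constant precisely because $p>3$, $h<p-2$ and $\vep$ is small; the weight $\<\al\>^{-h}$ with $h>1$ is only used to control the $\tau$-integration. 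Your sentence about $h<p-2$ "absorbing the loss" gestures at this mechanism, but it is incompatible with the pointwise bound you actually derived, in which the logarithmic gain has already been frozen to a constant; likewise the inhomogeneous term is not handled through the $K_k$-statement of Lemma \ref{formulae} but through the representation formula and Lemma \ref{linear-tech}, followed by the case analysis in $\al=\tau-\la$, $\be=\tau+\la$ with the hyperbolic lower bounds \eqref{funda}--\eqref{fundat}. Restoring the $\la$-dependent logarithm and then running that case analysis recovers the paper's argument.
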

\subsection{Proof of Lemma \ref{mainlem}}
By \eqref{nonlinear}, there exists $A>2N_h$ so that
\beeqa\label{nonlinear2}
|F_p'(u)|\le A \left(\ln \frac{1}{|u|}\right)^{1-p}\triangleq G(u), \forall |u|\le \frac 1 A\ .
\eneqa

For any $u\in X_\vep$, as $\Phi_h\ge 1$, we know that
$$\|u\|_{L^\infty_{t,x}}\le
\|\Phi_h u\|_{L^\infty_{t,x}}=
 \|u\|\le 2 \vep N_h\le \frac 1 A\ ,$$
provided that $\vep\in (0, 1/(2N_h A)]=(0, \vep_1]$, for which we assume in what follows.
Then for any $u, v\in X_\vep$,
in view of
\eqref{nonlinear2} and the monotonicity of $G$,  we get
$$|F_p(u)-F_p(v)|\le G(\max(|u|,|v|)) |u-v|\le
G\left(\frac{2\vep N_h}{\Phi_h}\right)  |u-v|
\ .
$$
Recall $\Phi_h(t,r)=e^{r/2}\<t-r\>^{h}\ge e^{r/2}$, we see that
\beqa
|F_p(u)-F_p(v)(t,x)|&\le&A\left(\ln\frac{1}{2\vep N_h}+r/2\right)^{1-p}  |u-v| \\
&\le&A2^{p-1}\left(\ln\frac{1}{\vep }+r\right)^{1-p} \frac{\|u-v\|}{\<t-r\>^h(\cosh r)^{\hf} }\ .
\eeqa


By \eqref{ie2}, \eqref{mean}, \eqref{radialmean} and \eqref{I}, with $a(s)=2\cosh s$, $f_{\tau}(\la)=\left(\ln\frac{1}{\vep }+\la\right)^{1-p} \frac{\sinh \la}{\<\tau-\la\>^{h}(\cosh \la )^{\hf}}$ we have 
\beqa
&&|(LF_p(u)-LF_p(u))(t,x)|\\
&\le&\frac{A2^{p-1}\|u-v\|}{\pi}\int_0^tW(t-\tau,r,f_{\tau})d\tau
\ .
\eeqa
With the help of the above estimate, the proof of
 \eqref{compressed} is then reduced to the proof of
  the following claim. Considering the main integral
\beeq\label{claim}
\int_0^tW(t-\tau,r,f_{\tau})d\tau\ ,
\eneq
we claim that
$$\eqref{claim}\ll (\cosh r)^{-\hf}\<t-r\>^{-h},$$
provided that
$p>3$, $h\in(1,p-2)$ and
 $\vep$ is sufficiently small.

Based on Lemma \ref{linear-tech}, we will divide the proof into two separate cases: $r>t$ and $t\ge r$. We also consider the nontrivial case $r+t\gg1$.


\subsubsection{Case 1: $r>t$}
In this case, we have $r>t\ge t-\tau$ and then 
$$\eqref{claim}\le\frac{\pi}{\sqrt{2}}\int_{0}^{t}\int_{r-(t-\tau)}^{r+t-\tau}\frac{\left(\ln\frac{1}{\vep }+\la\right)^{1-p}\<\tau-\la\>^{-h}\sinh\la}{(\cosh\la)^{\hf}(\cosh(r+\la)-\cosh(t-\tau))^{\hf}}d\la d\tau.$$
Based on \eqref{coshd} and the fact of $r+\la-(t-\tau)\ge\la$, we have 
\beeq\label{funda}
\cosh(r+\la)-\cosh(t-\tau)\ge\cosh(r+\la)(1-\frac{1}{\cosh\la})\gtrsim\cosh(r+\la)(\la^2\wedge1).
\eneq
Therefore with $h\in(1,p-2)$ and $p>3$, \eqref{claim} can be controlled by 
\beqa
&&\int_{0}^{t}\int_{r-(t-\tau)}^{r+t-\tau}\frac{\left(\ln\frac{1}{\vep }+\la\right)^{1-p}\<\tau-\la\>^{-h}\sinh\la}{(\cosh\la)^{\hf}(\cosh(r+\la))^{\hf}(\la\wedge1)}d\la d\tau\\
&\le&(\cosh r)^{-\hf}\int_{0}^{t}\int_{r-(t-\tau)}^{r+t-\tau}\left(\ln\frac{1}{\vep }+\la\right)^{1-p}\<\tau-\la\>^{-h}d\la d\tau\\
&=&(\cosh r)^{-\hf}\int_{r-t}^{r+t}\left(\ln\frac{1}{\vep }+\la\right)^{1-p}\int_{0}^{t-|r-\la|}\<\tau-\la\>^{-h}d\tau d\la\\
&\les&(\cosh r)^{-\hf}\int_{r-t}^{r+t}\left(\ln\frac{1}{\vep }+\la\right)^{1-p}d\la\\
&\les&(\cosh r)^{-\hf}\left(\ln\frac{1}{\vep }+r-t\right)^{2-p}\ll(\cosh r)^{-\hf}\<t-r\>^{-h}.
\eeqa

\subsubsection{Case 2: ${t\ge r}$}
In this case, we have to divide the integral of \eqref{claim} into two parts $\tau\ge t-r$ and $\tau<t-r$:
\begin{align*}
\eqref{claim}\le&\frac{\pi}{\sqrt{2}}\int_{0}^{t-r}\int_{0}^{t-\tau+r}\frac{\left(\ln\frac{1}{\vep }+\la\right)^{1-p}\<\tau-\la\>^{-h}\sinh\la}{(\cosh\la)^{\hf}|\cosh(r+\la)-\cosh(t-\tau)|^{\hf}}d\la d\tau\\
&+\frac{\pi}{\sqrt{2}}\int_{t-r}^t\int_{r-(t-\tau)}^{r+t-\tau}\frac{\left(\ln\frac{1}{\vep }+\la\right)^{1-p}\<\tau-\la\>^{-h}\sinh\la}{(\cosh\la)^{\hf}(\cosh(r+\la)-\cosh(t-\tau))^{\hf}}d\la d\tau\\
=&\frac{\pi}{\sqrt{2}}\int_{0}^{t-r}\int_{0}^{(t-\tau)-r}\frac{\left(\ln\frac{1}{\vep }+\la\right)^{1-p}\<\tau-\la\>^{-h}\sinh\la}{(\cosh\la)^{\hf}(\cosh(t-\tau)-\cosh(r+\la))^{\hf}}d\la d\tau\\
&+\frac{\pi}{\sqrt{2}}\int_{0}^t\int_{|r-(t-\tau)|}^{r+t-\tau}\frac{\left(\ln\frac{1}{\vep }+\la\right)^{1-p}\<\tau-\la\>^{-h}\sinh\la}{(\cosh\la)^{\hf}(\cosh(r+\la)-\cosh(t-\tau))^{\hf}}d\la d\tau\\
=&\frac{\pi}{2\sqrt{2}}\int_0^{t-r}\int_{-\be}^{\be}\frac{\left(\ln\frac{1}{\vep }+(\be-\al)/2\right)^{1-p}\<\al\>^{-h}\sinh\la}{(\cosh\la)^{\hf}(\cosh(t-\tau)-\cosh(r+\la))^{\hf}}d\al d\be\\
&+\frac{\pi}{2\sqrt{2}}\int_{t-r}^{t+r}\int_{-\be}^{t-r}\frac{\left(\ln\frac{1}{\vep }+(\be-\al)/2\right)^{1-p}\<\al\>^{-h}\sinh\la}{(\cosh\la)^{\hf}(\cosh(r+\la)-\cosh(t-\tau))^{\hf}}d\al d\be
\end{align*}
where we introduce new variables of integration $\al=\tau-\la,\be=\tau+\la$. 
To ensure integrability, we make use of 
\beeq\label{fundat}
\begin{split}
&\cosh(t-\tau)-\cosh(r+\la)\gtrsim\begin{cases}\sinh(r+\la)(t-r-\be)\ &t-r-\be\le1\\\cosh(t-\tau)\ &t-r-\be\ge1\end{cases}\\
&\cosh(r+\la)-\cosh(t-\tau)\gtrsim\begin{cases}\sinh(t-\tau)(\be-(t-r))\ &\be-(t-r)\le1\\\cosh(r+\la)\ &\be-(t-r)\ge1\end{cases}
\end{split}
\eneq
instead of \eqref{funda}.
Without loss of generality, we only consider $r\gg1$ and $t-r\gg1$. Based on \eqref{fundat}, and the fact that
$$\cosh(t-\tau)=\cosh(t-r-\be+r+\la)\ge\cosh(t-r-\be)\cosh r\cosh\la$$
for $t-r-\be>0$, 
$$\sinh(t-\tau)=\sinh(t-r+1-\be+r-1+\la)\ge\cosh(t-r+1-\be)\cosh(r-1)\sinh\la$$
for $\be\le t-r+1$,
the case is reduced to following main integrals:
\beeqa
\label{fir}&&(\cosh r)^{-\hf}\int_0^{t-r-1}\int_{-\be}^{\be}\frac{\left(\ln\frac{1}{\vep }+\be-\al\right)^{1-p}\<\al\>^{-h}}{(\cosh(t-r-\be))^{\hf}}d\al d\be,\\
\label{sec}&&(\cosh r)^{-\hf}\int_{t-r-1}^{t-r}\int_{-\be}^{\be}\frac{\left(\ln\frac{1}{\vep }+\be-\al\right)^{1-p}\<\al\>^{-h}}{(t-r-\be)^{\hf}}d\al d\be,\\
\label{thi}&&(\cosh r)^{-\hf}\int_{t-r}^{t-r+1}\int_{-\be}^{t-r}\frac{\left(\ln\frac{1}{\vep }+\be-\al\right)^{1-p}\<\al\>^{-h}}{(\be-(t-r))^{\hf}}d\al d\be,\\
\label{fou}&&(\cosh r)^{-\hf}\int_{t-r+1}^{t+r}\int_{-\be}^{t-r}\left(\ln\frac{1}{\vep }+\be-\al\right)^{1-p}\<\al\>^{-h}d\al d\be.
\eneqa

Firstly, with $h\in(1,p-1)$ and sufficiently small $\vep$,
 we  focus on \eqref{fir} and divide it into
\beqa
&&\int_0^{t-r-1}\int_{-\be}^{\be}\frac{\left(\ln\frac{1}{\vep }+\be-\al\right)^{1-p}\<\al\>^{-h}}{(\cosh(t-r-\be))^{\hf}}d\al d\be\\
&=&\int_{0}^{(t-r)/2}(\int_{-\be}^{\be/2}+\int_{\be/2}^{\be})+\int_{(t-r)/2}^{t-r-1}(\int_{-\be}^{\be/2}+\int_{\be/2}^{\be})=:J_{ori}+J_{lar}\ .
\eeqa
There has enough decay of $t-r$ in $J_{ori}$ due to $(\cosh(t-r-\be))^{-\hf}$ as follows
\begin{equation*}
\begin{split}
J_{ori}\les&(\cosh(t-r))^{-\frac14}\int_0^{(t-r)/2}\left(\ln\frac{1}{\vep }+\be\right)^{1-p}\int_{-\be}^{\be/2}\<\al\>^{-h}d\al d\be\\
&+(\cosh(t-r))^{-\frac14}\int_0^{(t-r)/2}\<\be\>^{-h}\int_{\be/2}^{\be}\left(\ln\frac{1}{\vep }+\be-\al\right)^{1-p}d\al d\be\\
\les&\left(\ln\frac{1}{\vep }\right)^{2-p}(\cosh(t-r))^{-\frac14}\ll\<t-r\>^{-h}\ .
\end{split}
\end{equation*}
Meanwhile for $J_{lar}$, the decay of $t-r$ arises from
the other two terms depending on 
the division of the region of $\al$ and then
\begin{equation*}
\begin{split}
J_{lar}\les&\left(\ln\frac{1}{\vep }+t-r\right)^{1-p}\int_{(t-r)/2}^{t-r-1}(\cosh(t-r-\be))^{-\hf}\int_{-\be}^{\be/2}\<\al\>^{-h}d\al d\be\\
&+\<t-r\>^{-h}\int_{(t-r)/2}^{t-r-1}(\cosh(t-r-\be))^{-\hf}\int_{\be/2}^{\be}\left(\ln\frac{1}{\vep }+\be-\al\right)^{1-p}d\al d\be\\
\les&\left(\ln\frac{1}{\vep }+t-r\right)^{1-p}+\left(\ln\frac{1}{\vep }\right)^{2-p}\<t-r\>^{-h}\ll\<t-r\>^{-h}\ .
\end{split}
\end{equation*}


Similar to $J_{lar}$, with $h\in(1,p-2)$ and sufficiently small $\vep$,
\eqref{fou} is reduced to
\beqa
&&\int_{t-r+1}^{t+r}\int_{-\be}^{t-r}\left(\ln\frac{1}{\vep }+\be-\al\right)^{1-p}\<\al\>^{-h}d\al d\be
\\
&\les&\int_{t-r+1}^{t+r}\left(\ln\frac{1}{\vep }+\be\right)^{1-p}\int_{-\be}^{(t-r)/2}\<\al\>^{-h}d\al d\be\\
&&+\<t-r\>^{-h}\int_{t-r+1}^{t+r}\int_{(t-r)/2}^{t-r}\left(\ln\frac{1}{\vep }+\be-\al\right)^{1-p}d\al d\be\\
&\les&\left(\ln\frac{1}{\vep }+t-r\right)^{2-p}+\left(\ln\frac{1}{\vep }\right)^{3-p}\<t-r\>^{-h}\ll\<t-r\>^{-h}\ .
\eeqa

Finally, since \eqref{thi} is almost the same as \eqref{sec}, we only consider \eqref{sec} with $h\in(1,p-1)$ in the similar way to $J_{lar}$:
\beqa
&&\int_{t-r-1}^{t-r}\int_{-\be}^{\be}\frac{\left(\ln\frac{1}{\vep }+\be-\al\right)^{1-p}\<\al\>^{-h}}{(t-r-\be)^{\hf}}d\al d\be\\
&\les&\left(\ln\frac{1}{\vep }+t-r\right)^{1-p}\int_{t-r-1}^{t-r}(t-r-\be)^{-\hf}\int_{-\be}^{\be/2}\<\al\>^{-h}d\al d\be\\
&&+\<t-r\>^{-h}\int_{t-r-1}^{t-r}(t-r-\be)^{-\hf}\int_{\be/2}^{\be}\left(\ln\frac{1}{\vep }+\be-\al\right)^{1-p}d\al d\be\\
&\les&\left(\ln\frac{1}{\vep }+t-r\right)^{1-p}+\left(\ln\frac{1}{\vep }\right)^{2-p}\<t-r\>^{-h}\ll\<t-r\>^{-h}.
\eeqa

\section{Formation of singularity}\label{blowup}
In this section, we present the proof of Theorem \ref{thm-bu} with
$F(u)=F_p(u)$ satisfying \eqref{nonlinearity} and the suitable data.
 As an initial step, we give the local existence and uniqueness for general data.
\subsection{Local existence and uniqueness}\label{sec-sub-bu}
We give a sketch of the proof for $t\in [0,T]$ with certain sufficiently small $T\in (0,1]$.

 
  Assume that $(u_0,u_1)\in C_c^1\times C_c$
 and  have their support in $|x|\le r_0$, by \eqref{linear}, we have 
\beeqa\label{lin}
|u^0(t,x)|\le
Ne^{-r/2}\chi_{r\le t+r_0}\le N \chi_{r\le t+r_0}
,
\eneqa for some $N>0$,
where $\chi$ is the characteristic function. Based on \eqref{lin}, we introduce 
the complete metric space $$X_T=\{u\in C([0,T]\times \htw): \|u\|\triangleq\|u\|_{L^\infty([0,T]\times \htw)}\le 2  N, \mathrm{supp}\ u(t)\subset \{r\le t+r_0\} \}\
 .$$
 As $F\in C^1$ with $F(0)=0=F^{\prime}(0)$, there exists $M>0$ such that
 $|F(u)-F(v)|\le M|u-v|\le M\|u-v\|$ 
 for any $u, v\in X_T$.
 and then
 it follows that
\beqa
|LF(u)(t,x)-LF(v)(t,x)|&\le &\int_0^tI(\tau,r,|F(u)-F(v)|(t-\tau))d\tau\\
&\le&M\|u-v\|\int_0^t\int_0^{\tau}\frac{\sinh s}{\sqrt{2\cosh\tau-2\cosh s}}ds d\tau\\
&=&M\|u-v\|\int_0^t\sqrt{2\cosh\tau-2}d\tau\ .
\eeqa
If $\sqrt{2}eMT\le 1/2$,
we see that
$$ 
\|LF(u)-LF(v)\|\le
\sqrt{2}eMT\|u-v\|\le \frac{1}{2}\|u-v\|,\ \forall u,v\in X_T\ .$$
Then, it is clear that the map
$$(T u)(t,x)= u^0+LF(u)\ $$
is a contraction map on $X_T$, which ensures local existence and uniqueness.

\subsection{Blow-up of the solution}
Because of the assumption of the initial data in Theorem \ref{thm-bu}, by \eqref{ie2} and Lemma \ref{iterforbu}, we have
\beeq\label{iterfir}
u(t,r)\ge\vep I(t,r,u_1)\ge\frac{C_0\vep}{(\sinh r)^{\hf}}\int_{t-r}^{t+r}u_1(\la)(\sinh\la)^{\hf}d\la\ge\frac{c_0\vep}{(\sinh r)^{\hf}},
\eneq
for some constant $0<c_0\ll1$, which is uniform in $(r,t)\in S$ with
$$S:=\{(\la,\tau):\tau_0<\tau-\la<2\tau_0,\tau+\la>3\tau_0\}\ .$$

Recalling  \eqref{nonlinearity}, there exists $0<\de_0<1$ so that
\beeq\label{eq-nonlinearity2}
F(u)\ge \de_0\left(\ln\frac{1}{|u|}\right)^{1-p}|u|, \forall |u|<\de_0\ ;\ 
F(u)\ge \de_0|u|^{q}, \forall |u|>1/\de_0\  .\eneq
Without loss of generality, we could assume $c_0\vep<\de_0(\sinh (\tau_0/2))^{\hf}$ so that, in view of \eqref{iterfir},
\beeq\label{eq-ite1}
F(u)(t,r)\ge\frac{ \de_0c_0\vep}{(\sinh r)^{\hf}}\left(\ln\frac{(\sinh r)^{\hf}}{c_0\vep}\right)^{1-p},\ 
\forall (r,t)\in S
\ .\eneq

\subsubsection{Improved lower bound}
To improve the lower bound, we introduce  the following regions for the original integral of $LF(u)$ by using Lemma \ref{iterforbu} and
 the $l$-th iteration
\begin{equation*}
\begin{split}
R_{r,t}:&=\{(\la,\tau):\tau\ge0,\tau-\la\le t-r,t\le\tau+\la\le t+r,|t-r-\tau|\ge\frac18\tau_0\}\ ,\\
\Si_l&=\{(\la,\tau):\tau-\la>6l\tau_0,\la>\hf\tau_0\}\ ,\\
T^l_{r,t}&=\{(\la,\tau)\in R_{r,t}\cap\Si_l:\frac{6l\tau_0+t-r}{2}\le\tau-\la\le t-r\}\ .
\end{split}
\end{equation*}
\begin{figure}[H]
\begin{tabular}{cc}
\begin{minipage}[t]{0.5\linewidth}
\centering
\includegraphics[width=0.8\textwidth]{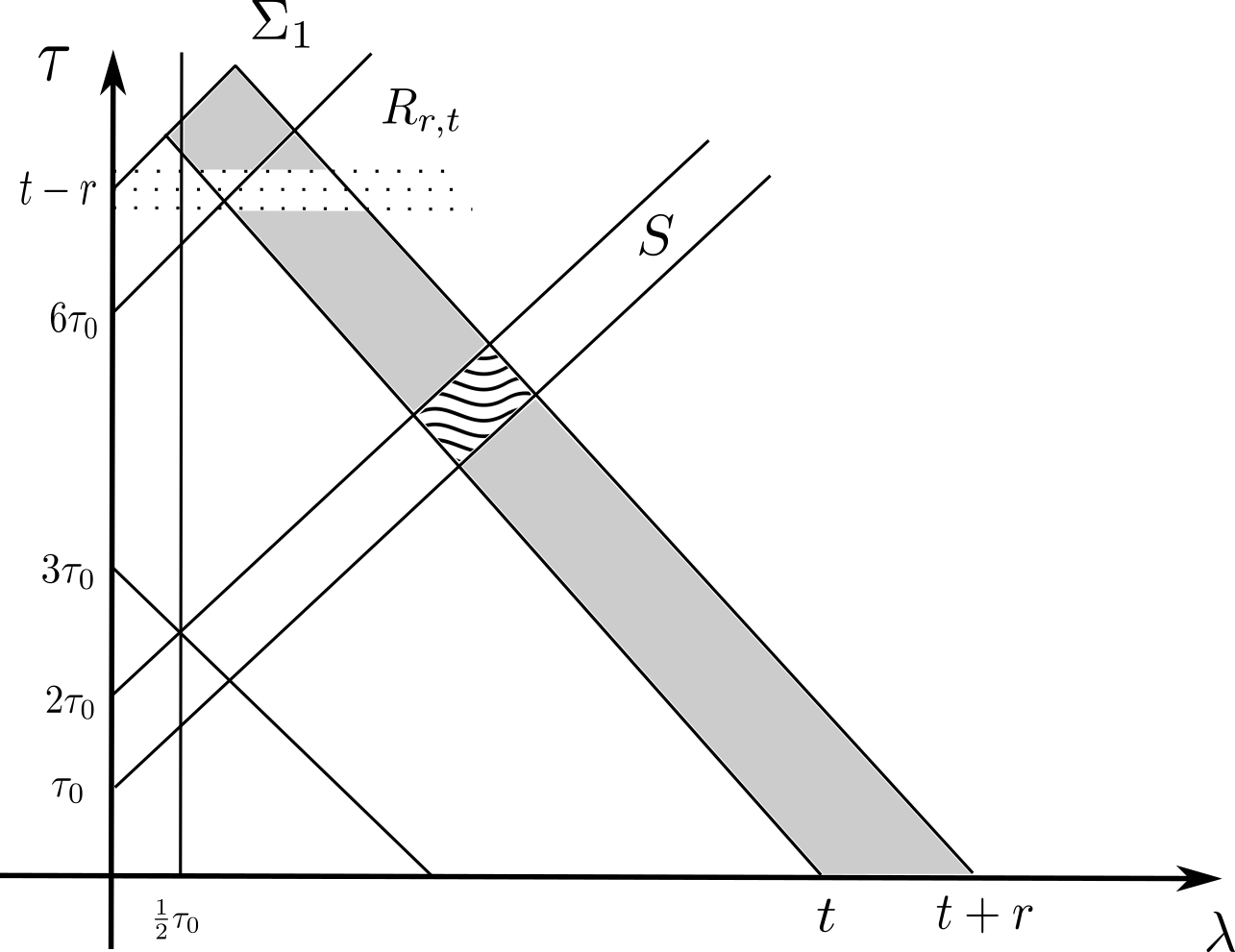}
\centerline{Figure 1}
\end{minipage}

\begin{minipage}[t]{0.5\linewidth}
\centering
\includegraphics[width=0.8\textwidth]{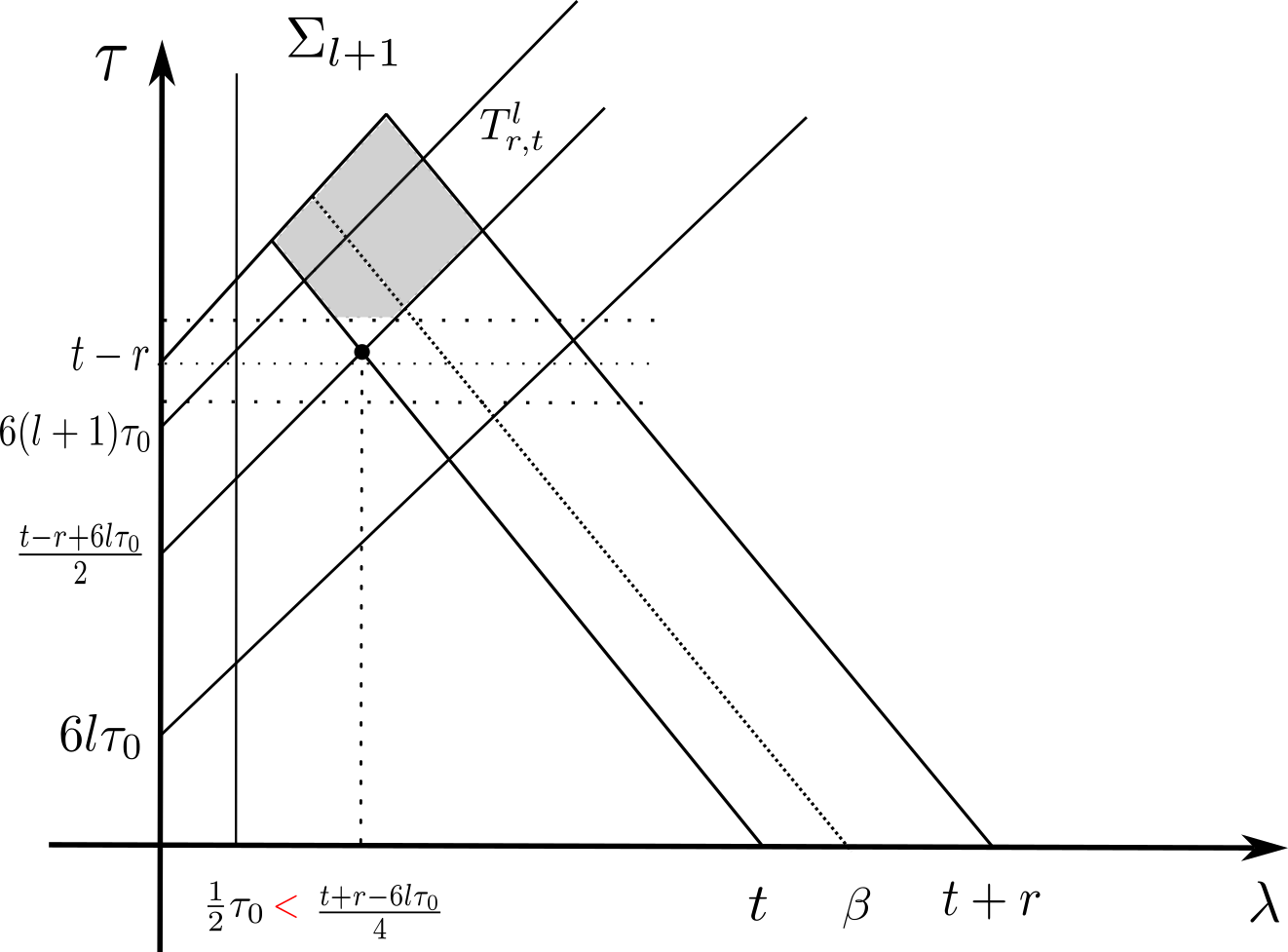}
\centerline{Figure 2}
\end{minipage}
\end{tabular}
\end{figure}
Based on \eqref{eq-ite1}, for any $(r,t)\in\Si_1$, we could iterate once more to obtain
\beqa
u(t,r)&\ge&\int_0^tI(t-\tau,r,F(u)(\tau))d\tau\\
&\ge&\frac{C_0}{(\sinh r)^{\hf}}\int_{[0,t]\cap|t-\tau-r|>\frac18\tau_0}\int_{|(t-\tau)-r|}^{t-\tau+r}F(u)(\tau,\la)(\sinh\la)^{\hf}d\la d\tau\\
&\ge&\frac{C_0\de_0c_0\vep}{(\sinh r)^{\hf}}\int_{R_{r,t}\cap S}\left(\ln\frac{1}{c_0\vep}+\la\right)^{1-p}d\la d\tau\\
&\ge&\frac{r\tau_0C_0\de_0c_0\vep }{4(\sinh r)^{\hf}}\left(\ln\left(\frac{1}{c_0\vep}\right)+t+r\right)^{1-p},
\eeqa
which means,  exists $c_1\in (0, c_0]$ such that
\beqa
u(t,r)\ge c_1\vep\frac{r}{(\sinh r)^{\hf}}\left(\ln\left(\frac{1}{c_1\vep}\right)+t+r\right)^{1-p}.
\eeqa

Suppose more generally that we have established an inequality of the form, 
\beeqa\label{form}
u(t,r)\ge c_l\vep\frac{r}{(\sinh r)^{\hf}}\left(t+r+\ln\left(\frac{1}{c_l\vep}\right)\right)^{-b_l}(t-r)^{a_l},\  \forall\ (r,t)\in\Si_l,
\eneqa
for some   $b_l>0$, $c_l>0$ and
$a_l\in [ 0, b_l]$. Obviously, as $a_l-b_l\le0$, we could possibly take sufficiently small $c_l$ such that the lower bound is less than $\de_0$ and we could use the logarithmic term to iterate.
Based on \eqref{form} and Figure 2,  a further iteration for any $(r,t)\in \Si_{l+1}$ yields
\beeqa
\nonumber u(t,r)&\ge&\frac{C_0}{(\sinh r)^{\hf}}\int_{T^l_{r,t}}F(u)(\tau,\la)(\sinh \la)^{\hf} d\la d\tau\\
\nonumber&\ge&\frac{C_l\vep}{(\sinh r)^{\hf}}\int_{T^l_{r,t}}\left(\ln\left(\frac{1}{c_l\vep}\right)+\tau+\la\right)^{1-p-b_l}(\tau-\la)^{a_l}\la d\la d\tau\\
\label{domain}&\ge&\frac{C_l\vep}{(\sinh r)^{\hf}}\left(\ln\left(\frac{1}{c_l\vep}\right)+t+r\right)^{1-p-b_l}(t-r)^{a_l}\int_{T^l_{r,t}}\la d\la d\tau
\eneqa
where $C_l$ are constants depending only on $l$ and different from line to line. Owing to our new coordinates, we can simplify the calculation of the last integral
\beqa
&&\int_{T^l_{r,t}}\la d\la d\tau\\
&=&\frac14\int_{t}^{t+r}\int_{[\frac{t-r+6l\tau_0}{2},(t-r)\wedge(\be-\tau_0)]\backslash [2(t-r-\frac{\tau_0}{8})-\be,2(t-r+\frac{\tau_0}{8})-\be]}\be-\al d\al d\be\\
&\ge&\int_{t}^{t+r}\int_{[\frac{t-r+6l\tau_0+\tau_0}{2},t-r-\tau_0]}t-r-\tau_0-\al d\al d\be\\
&\ge&\hf(\frac{t-r-6l\tau_0-3\tau_0}{2})^2r.
\eeqa
If we assume $(r,t)\in \Si_{l+1}$, we get
$t-r-6l\tau_0-3\tau_0\sim t-r$ and so is 
 \eqref{form} with $a_{l+1}=a_l+2$, $b_{l+1}=p-1+b_l$ and some $c_{l+1}\in (0, c_l)$.
 
 By induction, 
 with
$a_1=0$ and $b_1=p-1$, it is clear that 
\eqref{form} with $l=j$ could be boosted to 
\eqref{form} with $l=j+1$, as long as $a_j\le b_j$.
As $a_j=2j-2$, $b_j=(p-1)j$, the procedure breaks in finite steps, if $1<p<3$. To be more specific, with $l_0:=\left[\frac{2}{3-p}\right]+1$, we have, for some $c>0$,
\beeqa\label{eq-postibd}
u(t,r)\ge c\vep \frac{r}{(\sinh r)^{\hf}}\left(t+r+\ln\left(\frac{1}{c\vep}\right)\right)^{-l_0(p-1)}(t-r)^{2l_0-2},\ 
\eneqa
for all $(r,t)\in\Si_{l_0}$.
Here, $-l_0(p-1)+2l_0-2=l_0(3-p)-2>0$.

\begin{remark}
When we deal with the iteration of $LF(u)$ by using Lemma \ref{iterforbu} in  \eqref{domain} for large $r$, it shows that \eqref{sdomain} behaves better to maintain the original domain of $LF(u)$ than \eqref{ldomain}, that is, in meaning of abandoned regions, $\tau_0 r$ vs $r^2$.
\end{remark}

\subsubsection{Further improved lower bound}
For this part, we closely follow the idea of John \cite{MR535704} to estimate the critical iterative format of constants.
Equipped with the lower bound \eqref{eq-postibd}, which blows up at infinity, we could exploit the power type nonlinearity to show blow up in finite time.

Let
$A_0=l_0(3-p)-2>0$,  and
$$
Y=\{(\la,\tau): \la>\hf\tau_0, \tau>T, \tau+\la\le T+\tau_0\}\ ,
$$
where $T>\left(\frac{1}{c\vep}\right)^{1/A_0}$ is a constant to be determined later so that
$Y\subset\Si_{l_0}$ (see Figure 3).

Restricted to $Y$,
the lower bound
 \eqref{eq-postibd} tells us that
\beeqa\label{fstep}
u(t,r)\ge \tilde c\vep\ t^{A_0}\ge \tilde c\vep\ T^{A_0},
\eneqa
for some $\tilde c>0$. We shall require $\tilde c\vep\ T^{A_0}>\de_0^{-1}$ so that we could
apply the power type nonlinearity \eqref{eq-nonlinearity2}:
$$F(u)\ge \de_0 |u|^q,\ \forall (r,t)\in Y\ .$$

\begin{figure}[H]
\centering
\includegraphics[width=0.4\textwidth]{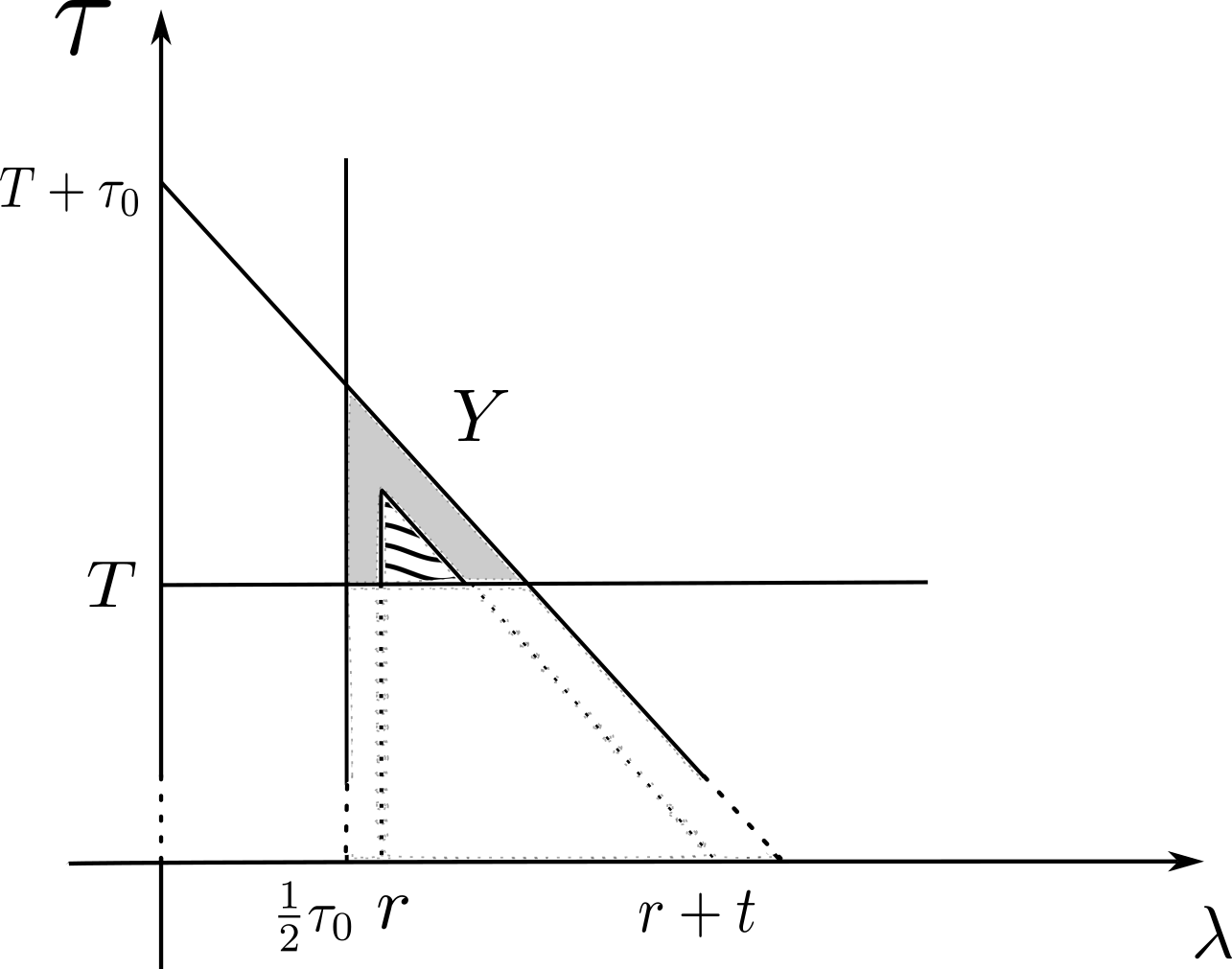}
\centerline{Figure 3}
\end{figure}
As before, we would like to boost \eqref{fstep} to illustrate blow up in finite time.
For such purpose, suppose that we have a lower bound of the following form
\beeqa\label{newform}
u(t,r)\ge D T^{A} (t-T)^{B}\ ,
\eneqa
 for $(r,t)\in Y$, with $A,B\ge0$.
Then Lemma \ref{iterforbu} yields for $r>t-\tau$
\begin{eqnarray*}
u(t,r)&\ge&\int_0^tI(t-\tau,r,F(u)(\tau))d\tau\\
&\ge &\frac{C_0}{(\sinh r)^{\hf}}\int_{t-r}^t\int_{r}^{r+t-\tau}F(u)(\tau,\la)(\sinh\la)^{\hf}d\la d\tau\\
&\ge &\frac{C_0\de_0(DT^A)^q}{(\sinh r)^{\hf}}\int_{T}^t\int_{r}^{r+t-\tau}(\tau-T)^{Bq}(\sinh\la)^{\hf}d\la d\tau
\\
&\ge&
C_0\de_0(DT^A)^q
\int_{T}^t(\tau-T)^{Bq}((t-T)-(\tau-T))d\tau
\\
 &=&\frac{ C_0\de_0D^q}{(Bq+1)(Bq+2)}T^{Aq}(t-T)^{Bq+2}\\
 & \ge&
 \frac{ C_0\de_0D^q}{(Bq+2)^2}T^{Aq}(t-T)^{Bq+2}\ .
\end{eqnarray*}

As we know \eqref{newform} with $D=\tilde c\vep$, $A=A_0$, $B=0$,
by induction, we have
\eqref{newform} with $D=D_m$, $A=A_m$ and $B=B_m$
for any $m\ge 0$, provided that
$D_0=\tilde c\vep$, $B_0=0$,
and
\beeq\label{m+1}
A_{m+1}=A_m q, B_{m+1}=B_m q+2, D_{m+1}
=
 \frac{C_0\de_0  D_m^q}{ (B_mq+2)^2}=
  \frac{C_0\de_0  D_m^q}{ B_{m+1}^2}
 \ .
\eneq
  Solving \eqref{m+1} yields for $m\ge1$
\beeqa\label{law1}
A_m=A_0 q^m,\  B_m=2\frac{q^m-1}{q-1}\le 2mq^{m-1},\  D_m\ge
\frac{C_0\de_0D_{m-1}^q}{4 m^2q^{2(m-1)}},
\eneqa
and thus
\beqa
D_m\ge \exp\left[q^m\left(\ln D_0-\sum_{j=0}^{m-1}\frac{2\ln(j+1)+2j\ln q-\ln\frac{C_0\de_0}{4}}{q^{j+1}}\right)\right].
\eeqa
Let
\beeqa\label{const}
E=\ln D_0-\sum_{j=0}^{\infty}\frac{2\ln(j+1)+2j\ln q-\ln\frac{C_0\de_0}{4}}{q^{j+1}}\ ,
\eneqa
for which the convergence is ensured by $q>1$,
it follows that, for any $m\ge 1$,
\beeqa\label{law2}
D_m\ge \exp(Eq^m)\ .
\eneqa

Then by \eqref{law1}, \eqref{const}, \eqref{law2}, we have for $(r,t)\in Y$ and sufficiently large $m$
\beqa
u(t,r)\ge \exp\left[q^m\left(E+A_0\ln T+\frac{2}{q-1}\ln(t-T)\right)\right](t-T)^{-\frac{2}{q-1}}.
\eeqa
Let $r=\hf\tau_0$ and $t=T+\hf\tau_0$, 
the term
$E+A_0\ln T+\frac{2}{q-1}\ln(t-T)
$ is positive, for sufficiently large $T$. Then, for such $T$, it follows that $u(T+\hf\tau_0,\hf\tau_0)\to\infty$ as $m\to\infty$, which is the desired contradiction.

\section{Appendix}
This section gathers some technical results we have previously used in Section \ref{pre} and Section \ref{global}.
The first estimate arises from \cite[Lemma D]{MR1443052}.
\begin{lem}\label{partial-tech}
Assume that $a\in C^1[0,\infty)$ satisfies the following conditions:
\begin{item}
\item 1.$a^{\pri}(s)>0$ for $s>0$;\\
\item 2.$\frac{a^{\pri}(t)-a^{\pri}(s)}{\sqrt{a(t)-a(s)}}$ is a monotonically decreasing function of $s$ for any $t>0$ and $0\le s\le t$.\\
\end{item}
Let $\mathbb{M}^n$ be Riemann manifolds with $n$ dimension and $Rv(t,x)=\int_0^t\frac{\hf a^{\pri}(s)}{\sqrt{a(t)-a(s)}}v(s,x)ds$ for each $v\in C(\R_+\times \mathbb{M}^n)$ such that $\pa_tv\in C(\R_+\times \mathbb{M}^n)$, one has
$$|\pa_t(Rv)(t,x)|\le \frac{\hf a^{\pri}(t)}{\sqrt{a(t)-a(0)}}|v(t,x)|+R|\pa_tv|(t,x)\quad (t,x)\in\R_+\times \mathbb{M}^n.$$
\end{lem}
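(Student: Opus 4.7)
The plan is to convert $Rv$ into a representation in which the kernel singularity at $s=t$ has been integrated away, so that the time derivative becomes elementary and the desired estimate reduces to a pointwise inequality that is exactly what hypothesis~2 delivers. As a first step, I would use the antiderivative identity $\partial_s[-\sqrt{a(t)-a(s)}]=\hf a^{\pri}(s)/\sqrt{a(t)-a(s)}$, together with the splitting $v(s,x)=v(t,x)+(v(s,x)-v(t,x))$. Writing $v(s,x)-v(t,x)=-\int_s^t\partial_\tau v(\tau,x)\,d\tau$ and swapping the order of integration in the resulting double integral produces
\[
Rv(t,x)=\sqrt{a(t)-a(0)}\,v(t,x)-\int_0^t\partial_\tau v(\tau,x)\bigl[\sqrt{a(t)-a(0)}-\sqrt{a(t)-a(\tau)}\bigr]\,d\tau,
\]
whose integrand is bounded and continuous up to $\tau=t$ (the bracket vanishes there).

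With this formula in hand, the Leibniz rule applies without any regularization. Differentiating in $t$, the contribution $\sqrt{a(t)-a(0)}\,\partial_t v(t,x)$ coming from differentiating $v(t,x)$ in the first term is cancelled exactly by the boundary contribution at $\tau=t$ of the second term, and the remaining pieces collapse to
\[
\partial_t Rv(t,x)=\frac{\hf a^{\pri}(t)}{\sqrt{a(t)-a(0)}}\,v(t,x)+\frac{a^{\pri}(t)}{2}\int_0^t\partial_\tau v(\tau,x)\left[\frac{1}{\sqrt{a(t)-a(\tau)}}-\frac{1}{\sqrt{a(t)-a(0)}}\right]d\tau.
\]
The first term is precisely the desired boundary contribution in the lemma after taking absolute values, so the task reduces to bounding the integral above by $R|\partial_t v|(t,x)=\int_0^t\frac{\hf a^{\pri}(\tau)}{\sqrt{a(t)-a(\tau)}}|\partial_\tau v(\tau,x)|\,d\tau$.

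The remaining step, which I expect to be the main technical point and the precise place where hypothesis~2 is used, is the pointwise inequality
\[
\frac{a^{\pri}(t)}{2}\left[\frac{1}{\sqrt{a(t)-a(\tau)}}-\frac{1}{\sqrt{a(t)-a(0)}}\right]\le\frac{\hf a^{\pri}(\tau)}{\sqrt{a(t)-a(\tau)}},\qquad 0\le\tau\le t.
\]
Clearing denominators, this is equivalent to $h(\tau)\le a^{\pri}(t)/\sqrt{a(t)-a(0)}$ with $h(\tau):=(a^{\pri}(t)-a^{\pri}(\tau))/\sqrt{a(t)-a(\tau)}$, which is exactly the expression assumed monotone decreasing in hypothesis~2. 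Consequently $h(\tau)\le h(0)=(a^{\pri}(t)-a^{\pri}(0))/\sqrt{a(t)-a(0)}$, and this is dominated by $a^{\pri}(t)/\sqrt{a(t)-a(0)}$ provided $a^{\pri}(0)\ge 0$, which follows automatically from $a\in C^1[0,\infty)$ together with hypothesis~1 by continuity at $s=0$. Plugging this pointwise bound back into the integral then gives $|\partial_t Rv(t,x)|\le \frac{\hf a^{\pri}(t)}{\sqrt{a(t)-a(0)}}|v(t,x)|+R|\partial_t v|(t,x)$, as required.
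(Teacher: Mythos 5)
Your proposal is correct and follows essentially the same route as the paper's proof: both rewrite $Rv$ in the regularized form $\sqrt{a(t)-a(0)}\,v(t,x)-\int_0^t\partial_\tau v(\tau,x)\bigl[\sqrt{a(t)-a(0)}-\sqrt{a(t)-a(\tau)}\bigr]d\tau$ (you via Fubini, the paper via integration by parts against $L(t,s)=\sqrt{a(t)-a(s)}$), differentiate in $t$, and reduce everything to the pointwise inequality $\frac{a'(t)-a'(\tau)}{\sqrt{a(t)-a(\tau)}}\le\frac{a'(t)-a'(0)}{\sqrt{a(t)-a(0)}}\le\frac{a'(t)}{\sqrt{a(t)-a(0)}}$ coming from hypothesis~2 and $a'(0)\ge 0$. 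The identification of where hypothesis~2 enters, and the justification of $a'(0)\ge0$, match the paper exactly.
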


\begin{proof}
Let $L(t,s)=\sqrt{a(t)-a(s)}$ ($0\le s\le t$). From $\pa_sL(t,s)=\frac{-\hf a^{\pri}(s)}{\sqrt{a(t)-a(s)}}$ and $L(t,t)=0$, we deduce $Rv(t,x)=L(t,0)v(0,x)+\int_0^tL(t,s)\pa_sv(s,x)ds$, and
\beqa
\pa_t(Rv)(t,x)&=&\pa_tL(t,0)v(0,x)+\int_0^t\pa_tL(t,s)\pa_sv(s,x)ds\\
&=&\pa_tL(t,0)v(t,x)+\int_0^t[\pa_tL(t,s)-\pa_tL(t,0)]\pa_sv(s,x)ds.
\eeqa
With the fact that $a^{\pri}(0)\ge0$ and
$$\frac{a^{\pri}(t)-a^{\pri}(s)}{\sqrt{a(t)-a(s)}}\le \frac{a^{\pri}(t)-a^{\pri}(0)}{\sqrt{a(t)-a(0)}}\le \frac{a^{\pri}(t)}{\sqrt{a(t)-a(0)}},$$
 we finish the proof by
\beqa
\pa_tL(t,s)-\pa_tL(t,0)=\frac{\hf a^{\pri}(t)}{\sqrt{a(t)-a(s)}}-\frac{\hf a^{\pri}(t)}{\sqrt{a(t)-a(0)}}\le \frac{\hf a^{\pri}(s)}{\sqrt{a(t)-a(s)}}.
\eeqa

\end{proof}

\begin{lem}\label{linear-tech}
Let $a\in C^1(\R)$ be an even function satisfying conditions in Lemma \ref{partial-tech}, $f\in C[0,\infty)$, and
$$W(t,r,f)=\int_0^t\int_{|r-s|}^{r+s}\frac{f(\la)a^{\pri}(s)}{(a(t)-a(s))^{\hf}(a(r+\la)-a(s))^{\hf}(a(s)-a(r-\la))^{\hf}}d\la ds,$$
 one has
\begin{equation*}
|W(t,r,f)|\le\begin{cases}
\pi\int_{r-t}^{r+t}\frac{|f(\la)|}{(a(r+\la)-a(t))^{\hf}}d\la\quad r\ge t\\
\pi\int_0^{t+r}\frac{|f(\la)|}{|a(r+\la)-a(t)|^{\hf}}d\la\quad t>r
\end{cases}.
\end{equation*}
\end{lem}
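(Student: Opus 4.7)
The plan is to swap the order of integration in the double integral defining $W(t,r,f)$ and then apply the substitution $u=a(s)$, reducing the inner $s$-integral to a classical beta-type integral $\int_b^d\frac{du}{\sqrt{(d-u)(u-b)}}=\pi$ after controlling the remaining square-root factor by its extremal value.

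First I would rewrite the region of integration. The constraints $s\in[0,t]$ and $\la\in[|r-s|,r+s]$ are equivalent, via $|r-s|\le\la\le r+s \Leftrightarrow |\la-r|\le s\le \la+r$, to $\la\in[\max(0,r-t),r+t]$ with $s\in[|\la-r|,\min(\la+r,t)]$. Since $a$ is even, $a(r-\la)=a(|r-\la|)$, and since $a'>0$ on $(0,\infty)$ the substitution $u=a(s)$ is a bijection on the relevant range with $du=a'(s)\,ds$, precisely absorbing the factor $a'(s)$ in the numerator. Writing $b=a(|\la-r|)$, $c=a(r+\la)$, $d=a(t)$, and $s_1=\min(\la+r,t)$, Fubini gives
$$W(t,r,f)=\int_{\max(0,r-t)}^{r+t}f(\la)\int_b^{a(s_1)}\frac{du}{\sqrt{(d-u)(c-u)(u-b)}}\,d\la.$$

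Then I would split into two cases. When $r\ge t$, every admissible $\la\in[r-t,r+t]$ satisfies $\la+r\ge r\ge t$, so $s_1=t$ and the inner integral runs from $b$ to $d$ with $d\le c$. On $[b,d]$ we have $(c-u)^{-1/2}\le (c-d)^{-1/2}$, and a trigonometric substitution $u=b+(d-b)\sin^2\theta$ evaluates $\int_b^d\frac{du}{\sqrt{(d-u)(u-b)}}=\pi$, giving the bound $\pi/(a(r+\la)-a(t))^{1/2}$. When $t>r$, the $\la$-range splits at $t-r$: for $\la>t-r$ we again have $s_1=t$ and the previous estimate applies, while for $\la\le t-r$ we have $s_1=\la+r$, so the inner integral runs from $b$ to $c$ with $c\le d$, and bounding $(d-u)^{-1/2}\le (d-c)^{-1/2}$ on $[b,c]$ yields $\pi/(a(t)-a(r+\la))^{1/2}$. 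Absorbing the signs into $|a(r+\la)-a(t)|^{1/2}$ produces the second line.

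The main obstacle is the bookkeeping of the domain swap and the two sub-cases for $t>r$; nothing deeper is required, and in fact the hypotheses of Lemma \ref{partial-tech} are not needed here beyond $a'>0$, since the key algebraic fact is that the $a'(s)$ in the numerator is exactly what converts the $s$-integral into a beta integral after $u=a(s)$.
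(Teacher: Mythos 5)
Your proposal is correct and follows essentially the same route as the paper: exchange the order of integration, pull out the square-root factor involving $a(r+\la)$ (or $a(t)$) at its extremal value, and recognize the remaining $s$-integral as the beta integral $B(\hf,\hf)=\pi$, your substitution $u=a(s)$ with a trigonometric change of variables being just an explicit version of the paper's direct appeal to the Beta function. Your closing remark that only $a'>0$ (plus evenness) is actually needed here is also accurate.
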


\begin{proof}
With the help of Beta function, we deduce the conclusion easily.\\
\textbullet $r\ge t$. In this case, we rewrite $W(t,r,f)$ as
\begin{equation*}
\begin{split}
W(t,r,f)&=\int_0^t\int_{r-s}^{r+s}\frac{f(\la)a^{\pri}(s)}{(a(t)-a(s))^{\hf}(a(r+\la)-a(s))^{\hf}(a(s)-a(r-\la))^{\hf}}d\la ds\\
&=\int_{r-t}^{r+t}\int_{|r-\la|}^t\frac{f(\la)a^{\pri}(s)}{(a(t)-a(s))^{\hf}(a(r+\la)-a(s))^{\hf}(a(s)-a(r-\la))^{\hf}} dsd\la,\\
|W(t,r,f)|&\le\int_{r-t}^{r+t}\frac{|f(\la)|}{(a(r+\la)-a(t))^{\hf}}\int_{|r-\la|}^t\frac{a^{\pri}(s)}{(a(t)-a(s))^{\hf}(a(s)-a(r-\la))^{\hf}} dsd\la.
\end{split}
\end{equation*}
With the fact that ($b<c$)
$$\int_{b}^{c}\frac{a^{\pri}(s)}{(a(c)-a(s))^{\hf}(a(s)-a(b))^{\hf}} ds=\int_0^1\nu^{-\hf}(1-\nu)^{-\hf}d\nu=B(\hf,\hf)=\pi\ ,$$
we have done.\\
\textbullet $t>r$. Similarly, we obtain
\begin{equation*}
\begin{split}
W(t,r,f)=&\int_{0}^{t-r}\int_{|r-\la|}^{r+\la}\frac{f(\la)a^{\pri}(s)}{(a(t)-a(s))^{\hf}(a(r+\la)-a(s))^{\hf}(a(s)-a(r-\la))^{\hf}} dsd\la\\
&+\int_{t-r}^{t+r}\int_{|r-\la|}^{t}\frac{f(\la)a^{\pri}(s)}{(a(t)-a(s))^{\hf}(a(r+\la)-a(s))^{\hf}(a(s)-a(r-\la))^{\hf}} dsd\la,\\
|W(t,r,f)|\le&\pi\int_{0}^{t+r}\frac{|f(\la)|}{|a(r+\la)-a(t)|^{\hf}}d\la.
\end{split}
\end{equation*}

\end{proof}
 

\bibliography{bib}

\begin{thebibliography}{10}

\bibitem{MR3254350}
Jean-Philippe Anker and Vittoria Pierfelice.
\newblock Wave and {K}lein-{G}ordon equations on hyperbolic spaces.
\newblock {\em Anal. PDE}, 7(4):953--995, 2014.

\bibitem{MR2902129}
Jean-Philippe Anker, Vittoria Pierfelice, and Maria Vallarino.
\newblock The wave equation on hyperbolic spaces.
\newblock {\em J. Differential Equations}, 252(10):5613--5661, 2012.

\bibitem{MR3345662}
Jean-Philippe Anker, Vittoria Pierfelice, and Maria Vallarino.
\newblock The wave equation on {D}amek-{R}icci spaces.
\newblock {\em Ann. Mat. Pura Appl. (4)}, 194(3):731--758, 2015.

\bibitem{MR1443052}
Jean Fontaine.
\newblock A semilinear wave equation on hyperbolic spaces.
\newblock {\em Comm. Partial Differential Equations}, 22(3-4):633--659, 1997.

\bibitem{MR1481816}
Vladimir Georgiev, Hans Lindblad, and Christopher~D. Sogge.
\newblock Weighted {S}trichartz estimates and global existence for semilinear
  wave equations.
\newblock {\em Amer. J. Math.}, 119(6):1291--1319, 1997.

\bibitem{MR631631}
Robert~T. Glassey.
\newblock Existence in the large for {$ cmu=F(u)$} in two space dimensions.
\newblock {\em Math. Z.}, 178(2):233--261, 1981.

\bibitem{MR618199}
Robert~T. Glassey.
\newblock Finite-time blow-up for solutions of nonlinear wave equations.
\newblock {\em Math. Z.}, 177(3):323--340, 1981.

\bibitem{MR1295101}
Paul G\"{u}nther.
\newblock {$L^\infty$}-decay estimations of the spherical mean value on
  symmetric spaces.
\newblock {\em Ann. Global Anal. Geom.}, 12(3):219--236, 1994.

\bibitem{MR535704}
Fritz John.
\newblock Blow-up of solutions of nonlinear wave equations in three space
  dimensions.
\newblock {\em Manuscripta Math.}, 28(1-3):235--268, 1979.

\bibitem{MR1408499}
Hans Lindblad and Christopher~D. Sogge.
\newblock Long-time existence for small amplitude semilinear wave equations.
\newblock {\em Amer. J. Math.}, 118(5):1047--1135, 1996.

\bibitem{MR2775816}
Jason Metcalfe and Michael Taylor.
\newblock Nonlinear waves on 3{D} hyperbolic space.
\newblock {\em Trans. Amer. Math. Soc.}, 363(7):3489--3529, 2011.

\bibitem{MR2944727}
Jason Metcalfe and Michael Taylor.
\newblock Dispersive wave estimates on 3{D} hyperbolic space.
\newblock {\em Proc. Amer. Math. Soc.}, 140(11):3861--3866, 2012.

\bibitem{MR744303}
Thomas~C. Sideris.
\newblock Nonexistence of global solutions to semilinear wave equations in high
  dimensions.
\newblock {\em J. Differential Equations}, 52(3):378--406, 1984.

\bibitem{MR4026182}
Yannick Sire, Christopher~D. Sogge, and Chengbo Wang.
\newblock The {S}trauss conjecture on negatively curved backgrounds.
\newblock {\em Discrete Contin. Dyn. Syst.}, 39(12):7081--7099, 2019.

\bibitem{MR4169670}
Yannick Sire, Christopher~D. Sogge, Chengbo Wang, and Junyong Zhang.
\newblock Strichartz estimates and {S}trauss conjecture on non-trapping
  asymptotically hyperbolic manifolds.
\newblock {\em Trans. Amer. Math. Soc.}, 373(11):7639--7668, 2020.

\bibitem{MR4432952}
Yannick Sire, Christopher~D. Sogge, Chengbo Wang, and Junyong Zhang.
\newblock Reversed {S}trichartz estimates for wave on non-trapping
  asymptotically hyperbolic manifolds and applications.
\newblock {\em Comm. Partial Differential Equations}, 47(6):1124--1132, 2022.

\bibitem{MR614228}
Walter~A. Strauss.
\newblock Nonlinear scattering theory at low energy.
\newblock {\em J. Functional Analysis}, 41(1):110--133, 1981.

\bibitem{MR1804518}
Daniel Tataru.
\newblock Strichartz estimates in the hyperbolic space and global existence for
  the semilinear wave equation.
\newblock {\em Trans. Amer. Math. Soc.}, 353(2):795--807, 2001.

\bibitem{MR2743652}
Michael~E. Taylor.
\newblock {\em Partial differential equations {II}. {Q}ualitative studies of
  linear equations}, volume 116 of {\em Applied Mathematical Sciences}.
\newblock Springer, New York, second edition, 2011.

\bibitem{wang2023wave}
Chengbo Wang and Xiaoran Zhang.
\newblock Wave equations with logarithmic nonlinearity on hyperbolic spaces,
  2023.
\newblock arXiv: 2304.01595.

\bibitem{MR4167287}
Hong-Wei Zhang.
\newblock Wave and {K}lein-{G}ordon equations on certain locally symmetric
  spaces.
\newblock {\em J. Geom. Anal.}, 30(4):4386--4406, 2020.

\bibitem{MR4310334}
Hong-Wei Zhang.
\newblock Wave equation on certain noncompact symmetric spaces.
\newblock {\em Pure Appl. Anal.}, 3(2):363--386, 2021.

\bibitem{MR1331521}
Yi~Zhou.
\newblock Cauchy problem for semilinear wave equations in four space dimensions
  with small initial data.
\newblock {\em J. Partial Differential Equations}, 8(2):135--144, 1995.

\end{thebibliography}
\bibliographystyle{plain}
\end{document}